\newif\ifcolorcomments
\newtheorem{theorem}{Theorem}[section]
\newtheorem{lemma}[theorem]{Lemma}
\newtheorem{proposition}[theorem]{Proposition}
\newtheorem{corollary}[theorem]{Corollary}
\theoremstyle{definition}
\newtheorem{remark}[theorem]{Remark}
\newtheorem{conjecture}[theorem]{Conjecture}
\newtheorem{problem}[theorem]{Problem}
\newcommand{\N}{\mathbb N}
\renewcommand{\text}{\textup}
\newcommand{\NPC}[1]{\ignorespaces}
\newif\ifdraft\drafttrue
\def\N{\mathbb N}
\newcommand\hdim{\dim_{\mathrm H}}
\renewcommand{\emptyset}{{\diameter}}
\newcommand*{\myDots}{\ifmmode\mathellipsis\else.\kern-0.07em.\kern-0.07em.\fi}
\newcommand {\ignore}[1] {}
\begin{document}

\title{On a Conjecture of Cusick on a sum of Cantor sets}

\subjclass[2020]{11A55, 11J70, 28A80}
\keywords{Continued fractions, Cantor sets, arithmetic sums of sets, Cusick's conjecture}

 \author{Nikita Shulga}
 \address{Nikita Shulga,  Department of Mathematical and Physical Sciences,  La Trobe University, Bendigo 3552, Australia. }
 \email{n.shulga@latrobe.edu.au}
 \date{}

\maketitle

\begin{abstract}
In 1971 Cusick proved that every real number $x\in[0,1]$ can be expressed as a sum of two continued fractions with no partial quotients equal to $1$. 
In other words, if we define a set
$$
S(k):= \{ x\in[0,1] : a_n(x) \geq k \text{ for all } n\in\N \},
$$
then
$$
S(2)+S(2) = [0,1].
$$
He also conjectured that this result is unique in the sense that if you exclude partial quotients from $1$ to $k-1$ with $k\geq3$, then the Lebesgue measure $\lambda$ of the set of numbers which can be expressed as a sum of two continued fractions with no partial quotients from $\{1,\ldots,k-1\}$ is equal to $0$, that is
$$\lambda\Bigl( S(k)+S(k) \Bigl)= 0 \text{ for }k\geq 3.$$
In this paper, we disprove the conjecture of Cusick by showing that
$$
S(k)+S(k) \supseteq  \left[0,\frac{1}{k-1}\right].
$$
The proof is constructive and does not rely on ideas from previous works on the topic. We also show the existence of countably many 'gaps' in $S(k)+S(k)$, that is intervals, for which the endpoints lie in $S(k)+S(k)$, while none of the elements in the interior do so. Finally, we prove several results on the sums 
$$
S(m)+S(n)
$$
for $m\neq n$.
\end{abstract}

\section{Introduction and main results}

In 1947, Hall in his classical work \cite{MR0022568} proved that any real number $x$ modulo $1$ can be expressed as a sum of two continued fractions with partial quotients bounded above by $4$. 
Formally, define a set
$$
F(k) := \left\{ x\in[0,1]:  1\leq a_n(x) \leq k \text{ for all } n\geq1          \right\},
$$
where $a_n(x)$ is the $n$th partial quotient of regular continued fraction of $x$. Hall's result states that
$
F(4)+F(4)
$
contains an interval of length greater than $1$, thus it includes every real number modulo $1$. This also means that every real number $x$ can be expressed as a sum of two elements from $F(4)$ and an integer.

Furthermore, Cusick \cite{MR0309875} and Diviš \cite{MR0371826} independently proved that Hall's result is in some sense best possible. They showed that 
$ F(3)+F(3)$ does not contain an interval of length $\geq1$.

Hall's work inspired many generalizations and improvements of this result. In particular, Cusick posed a similar question about partial quotients bounded from below. To be specific, he defined a set
$$
S(k) := \left\{ x\in[0,1]:    a_n(x) \geq k \text{ for all } n\geq1          \right\}.
$$

For any rational number $p/q$, partial quotients $a_n(p/q)$ are undefined for large enough $n$, but we still include them in $S(k)$ if all partial quotients which are defined satisfy $a_n(p/q)\geq k$. We also include $0$ in $S(k)$, as one can think of $0$ as having a continued fraction $0=\frac{1}{\infty}$, that is it has one partial quotient $a_1(0)$ equals $+\infty$, which satisfies $a_1(0)\geq k$ for any $k$.

Using methods of Hall, Cusick \cite{MR0269603} proved that $S(2)+S(2)=[0,1]$, meaning that each real number $x\in [0,1]$ can be expressed as a sum of two continued fractions with no partial quotients equal to $1$.

In \cite{MR0282924} Cusick and Lee proved that $kS(k)=[0,1]$ so that every number in the interval $[0,1]$ is representable as a sum of $k$ elements of $S(k)$. This result can also be deduced from a later general theorem due to Hlavka \cite{MR0376545}.

Easy to see that for $k\geq3$, the sum $S(k)+S(k)$ cannot be equal to the whole interval $[0,1]$, as we trivially have $S(k)+S(k)\subseteq \left[0,\frac{2}{k} \right]$. For a fixed $k$ we call an interval $[0,\frac{2}{k}]$ a \textit{maximal} interval. However, one can show that in general $S(k)+S(k)\neq \left[0,\frac{2}{k} \right]$ as, for instance, an interval $\left(\frac{7}{12}, \frac{3}{5}\right) \subset \left[0,\frac{2}{3}\right]$ is not contained in $S(3)+S(3)$. Also, Cusick noted that on each level of building $S(k)$ as a Cantor set, we are removing a larger portion of each interval, which makes the original methods of Hall not very useful when applied to the set $S(k)$ with $k\geq3$. All this suggested the following conjecture, that Cusick formulated in \cite{MR0269603}.
\begin{conjecture}\label{conjecture:main}
    The set of real numbers which can be expressed as a sum of two copies of $S(k)$ for $k\geq3$ is of Lebesgue measure zero, that is
 $$\lambda\Bigl( S(k)+S(k) \Bigl)= 0 \text{ for }k\geq 3.$$
\end{conjecture}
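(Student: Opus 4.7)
Since $S(k)$ is naturally a Cantor set for $k\geq 3$, the first thing to try is a Hausdorff-dimension approach. Viewing $S(k)$ as the limit set of the infinite conformal IFS $\{T_a(x)=1/(a+x):a\geq k\}$, Mauldin--Urba\'nski theory identifies $d_k:=\dim_H S(k)$ with the unique zero of the associated pressure function
$$P(s)=\lim_{n\to\infty}\frac{1}{n}\log\sum_{a_1,\ldots,a_n\geq k}\bigl|C(a_1,\ldots,a_n)\bigr|^s;$$
a proof that $d_k<1/2$ would immediately give $\dim_H(S(k)+S(k))\leq 2d_k<1$, hence $\lambda(S(k)+S(k))=0$. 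However, the first-level estimate $\sum_{a\geq k}a^{-2s}=1$ already shows $d_k>1/2$ for every $k\geq 2$ (the defining series diverges at $s=1/2$), so the dimension of $S(k)$ is too large for any such soft argument to work.

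The alternative I would pursue is a direct covering argument exploiting Cusick's observation that at each level of the construction, strictly more than half of every cylinder is discarded. Let $E_n$ be the union of level-$n$ continued-fraction cylinders whose digits all satisfy $a_i\geq k$, so $S(k)=\bigcap_n E_n$. Within any cylinder $C(a_1,\ldots,a_n)$, the sub-cylinders extending to an admissible digit $a_{n+1}\geq k$ occupy a proportion only $\approx 1/k$ of $|C(a_1,\ldots,a_n)|$, concentrated in a single sub-interval at one end; the complementary "gap" has relative length at least $1-1/k\geq 2/3$. The plan is to show inductively that for a set of translations $t\in[0,2/k]$ of positive density, every representation $t=y+z$ with $y,z\in E_n$ forces at least one of $y$, $z$ into such a gap at the next level, so $t\notin E_{n+1}+E_{n+1}$. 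Iterating this gap-survival principle across all scales and summing over $n$ should then force $\lambda(S(k)+S(k))=\lambda\bigl(\bigcap_n(E_n+E_n)\bigr)=0$.

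The principal obstacle is the translation-equivariance problem. Because the contractions $T_a$ have wildly different ratios for different first digits $a$, the gaps at level $n+1$ sit in essentially arbitrary positions within each level-$n$ cylinder, and under addition a gap in one summand can easily be filled by a cylinder from the other summand. Quantifying precisely when such filling cannot occur requires controlling Diophantine coincidences between the continued-fraction expansions of $y$ and $t-y$ for $y\in S(k)$, and moreover doing so simultaneously across a set of translates $t$ of positive Lebesgue measure. I would expect the argument to stall exactly at this step: since both the dimensional size bound and the naive "large-removal" counting break down, any genuine proof would need a finer arithmetic input -- some explicit combinatorial obstruction to writing a given $t\in[0,2/k]$ as a sum of two elements of $S(k)$ -- that neither strategy provides on its own.
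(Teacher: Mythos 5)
You have attempted to prove a statement that the paper in fact refutes: its treatment of Conjecture \ref{conjecture:main} is a disproof, not a proof. Theorem \ref{theorem:main} shows constructively that $S(k)+S(k)\supseteq\left[0,\frac{1}{k-1}\right]$ for every $k\ge 2$, hence $\lambda\bigl(S(k)+S(k)\bigr)\ge\frac{1}{k-1}>0$ for all $k\ge 3$, so no strategy for proving measure zero can succeed. The mechanism is an explicit alternating greedy algorithm (Section \ref{sec:main:construction}): for $x\in\left(0,\frac{1}{k-1}\right]$ one sets $c_1=a_1(x)+1$ and then iteratively defines $b_n=a_n\!\left(x-\frac{p_n}{q_n}\right)$ and $c_{n+1}=a_{n+1}\!\left(x-\frac{s_n}{t_n}\right)+1$. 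Lemmas \ref{bk_correctly_defined} and \ref{ck+1_correctly_defined} show these choices are always admissible, the cylinder estimates give $\frac{p_n}{q_n}+\frac{s_n}{t_n}\to x$, and Lemmas \ref{lemma:growth:bn} and \ref{lemma:growth:cn} give $b_n\ge c_n$ and $c_{n+1}\ge (c_1-1)^2\ge k$ whenever $c_1\ge k\ge 3$, i.e.\ whenever $x\le\frac{1}{k-1}$. Thus every such $x$ is a sum of two elements of $S(k)$, and your proposed inductive claim that a positive-density set of translates $t$ eventually loses all representations at some level $E_{n+1}+E_{n+1}$ is false on the whole interval $\left(0,\frac{1}{k-1}\right]$.

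Within your sketch, your diagnosis of the dimension route is correct ($\hdim S(k)>\frac12$, so no bound of the form $\hdim(S(k)+S(k))\le 2\hdim S(k)<1$ is available), and the ``translation-equivariance problem'' you flag is exactly where the truth lies, but with the opposite sign: the gaps of one summand are always compensated by cylinders of the other, because the two expansions are not independent. The paper quantifies this through the continuant inequalities of Lemmas \ref{lemma:quotient:qlarge} and \ref{lemma:quotient:tlarge}, namely $q_nq_{n-1}>t_{n-1}(t_{n-1}+t_{n-2})$ and $t_{n-1}(t_n+t_{n-1})>q_n(q_n-q_{n-1})$, which force $b_1\ge c_1(c_1-1)$ and then, via Lemma \ref{lemma:quotient:tnqn}, partial quotients of size at least $(c_1-1)^2$ forever after; the ``finer arithmetic input'' you anticipated thus exists, but it produces an interval inside the sumset rather than a covering argument for measure zero. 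The only vestige of the conjectured smallness that survives is Theorem \ref{prop:gaps}: countably many explicit gaps $G_{k,n}$ show $S(k)+S(k)\neq\left[0,\frac{2}{k}\right]$, yet the measure remains bounded below by $\frac{1}{k-1}$.
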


 In a great paper \cite{MR1491854}, Astels provided several general theorems about the size of arithmetic sums and products of Cantor sets, which were applied to continued fractions sets. His main result is formulated in terms of {\it thickness} of a Cantor set and is applicable to a sum of $k$, possibly different, Cantor sets. The statement requires some definitions and notation, thus we skip it here and refer to \cite{MR1491854} for more details.

 Using this theorem, he recovered results by Hall, Cusick, Cusick-Lee, and some results by Hlavka mentioned below. However, once applied to the sumset $S(k)+S(k)$, this theorem only provides a lower bound on the Hausdorff dimension, which for a set $X$ is denoted by $\hdim X$. To be more precise, the current best-known result about the size of $S(k)+S(k)$, which follows from Astels' result, is the bound
\begin{equation}\label{hdim:bound:astels}
\hdim \left( S(k)+S(k) \right) \geq \frac{\log 2}{\log\left( 1+\frac{k}{2} \right)}.
\end{equation}
 
In particular, for $k=3$ and $k=6$ this bound becomes
$$
\hdim \left( S(3)+S(3) \right) \geq \frac{\log 2}{\log\left( \frac{5}{2} \right)}=0.756^+ \text{ and }  \hdim \left( S(6)+S(6) \right) \geq \frac{1}{2}.
$$

The (sufficient) condition provided by Astels for a sum of Cantor sets to contain an interval fails for $S(k)+S(k)$ for any $k>2$, which also suggested that for $k\geq3$ this sumset is of zero Lebesgue measure. We also note that the bound \eqref{hdim:bound:astels} becomes trivial for all $k\geq6$ due to the result of Good \cite{MR0004878}, which states
$$
\hdim S(k) \geq \frac{1}{2}+\frac{1}{2\log(k+2)} \text{ for } k\geq14.
$$

The current best-known asymptotic formula for $\hdim S(k)$ was recently obtained in \cite{MR4540838} using thermodynamic formalism and functional analysis. See also \cite{MR2672614} for a slightly worse asymptotic result.

\vskip0.3cm

In this manuscript, we disprove Conjecture \ref{conjecture:main} on a sum of two Cantor sets. Namely, we prove the following result.
\begin{theorem}\label{theorem:main}
For any $k\geq2$, the sum $S(k)+S(k)$ contains an interval $\left[0,\frac{1}{k-1}\right]$. 

Thus, 
 $$
 \lambda\Bigl( S(k)+S(k) \Bigl)\geq \frac{1}{k-1} \text{ for }k\geq 2.
 $$
\end{theorem}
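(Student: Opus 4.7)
My plan is to give a direct, constructive proof: for every $x \in [0, 1/(k-1)]$ exhibit a pair $(y, z) \in S(k) \times S(k)$ with $y + z = x$. I would first dispose of the extreme cases. At $x = 0$, take $y = z = 0$. At $x = 1/(k-1)$, take $y = z = 1/(2(k-1))$; the continued fraction $[0; 2(k-1)]$ has its sole partial quotient equal to $2(k-1)$, and $2(k-1) \geq k$ holds for all $k \geq 2$, so both $y, z \in S(k)$. Note moreover that the identity $1/(k-1) = 1/k + 1/(k(k-1))$ provides an alternative decomposition of the same endpoint, which shows already that there is nontrivial flexibility in the choice of the first partial quotients and suggests that one has room to maneuver in the interior.

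For general $x \in (0, 1/(k-1))$, I would construct the continued fractions $y = [0; a_1, a_2, \ldots]$ and $z = [0; b_1, b_2, \ldots]$ in parallel, one digit at a time, maintaining the invariant $a_i, b_i \geq k$ throughout. Fixing $a_1, b_1 \geq k$, the sum condition $y + z = x$ translates into the constraint $1/(a_1 + \alpha) + 1/(b_1 + \beta) = x$ on the tails $\alpha, \beta \in [0, 1/k]$, equivalently $(a_1 + \alpha)(b_1 + \beta)x = a_1 + b_1 + \alpha + \beta$. The crux of the argument is an iteration lemma: for every $x \in [0, 1/(k-1)]$ there is a choice of integers $a_1, b_1 \geq k$ such that the induced constraint on $(\alpha, \beta)$ can be reduced to a residual target $x' \in [0, 1/(k-1)]$ to which the same procedure applies. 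Iterating the lemma produces admissible infinite expansions whose limits $y, z$ lie in $S(k)$ with $y + z = x$ by construction.

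The main obstacle will be establishing the iteration lemma uniformly over $[0, 1/(k-1)]$. The natural case split is by the integer $n = \lfloor 1/x \rfloor \geq k - 1$. When $n \geq k$ the construction should be fairly flexible, since $y$ and $z$ have many admissible choices for their leading partial quotients; the delicate regime is $n = k-1$ (i.e.\ $x \in (1/k, 1/(k-1)]$), where both $y$ and $z$ are squeezed close to $1/k$ and $a_1, b_1$ are tightly coupled through the product equation above. In this regime I would carry out a careful arithmetic analysis of the interaction of the two inverse-Gauss maps $\phi_n(y) = 1/(n+y)$, showing that their ranges at appropriate index pairs $(a_1, b_1)$ cover the residual target via the reduction step. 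As emphasized in the introduction, Newhouse's thickness bound for $S(k)$ fails for $k \geq 3$, so no general Cantor-sum principle applies; the argument must instead exploit the specific arithmetic structure of the maps $\phi_n$ intrinsic to $S(k)$, which is precisely what allows the endpoint $1/(k-1)$ to be attained.
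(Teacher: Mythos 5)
Your proposal correctly identifies the two endpoints and correctly frames the problem as building both expansions digit by digit, but the central ``iteration lemma'' you describe does not reflect what actually makes the construction work, and as stated it is not even well posed. After fixing $a_1,b_1\geq k$, the condition $1/(a_1+\alpha)+1/(b_1+\beta)=x$ defines a one-parameter curve in $(\alpha,\beta)$-space, not a single residual number; there is no canonical way to ``reduce'' this to a new target $x'\in[0,1/(k-1)]$ to which ``the same procedure applies.'' Indeed, the set $S(k)+S(k)$ has no such self-similar structure, and the paper's argument is emphatically \emph{not} a reduction back into $[0,1/(k-1)]$.

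What the paper actually does is break the symmetry between the two expansions. It defines $c_1 = a_1(x)+1$ (the $+1$ shift is essential) and then alternately takes
\begin{equation}
b_n = a_n\Bigl(x - \tfrac{p_n}{q_n}\Bigr), \qquad c_{n+1} = a_{n+1}\Bigl(x - \tfrac{s_n}{t_n}\Bigr)+1,
\end{equation}
where $p_n/q_n=[c_1,\ldots,c_n]$ and $s_n/t_n=[b_1,\ldots,b_n]$. Each step is a greedy approximation of the \emph{current residual} $x-p_n/q_n$ or $x-s_n/t_n$, one expansion at a time, and the only invariant that is preserved is a pair of continuant inequalities (Lemmas~\ref{lemma:quotient:qlarge} and~\ref{lemma:quotient:tlarge}), not membership of a residual in $[0,1/(k-1)]$. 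The two key quantitative facts your sketch has no analogue for are: (i) $b_n\geq c_n$ for all $n$ (Lemma~\ref{lemma:growth:bn}), so bounding $c_n$ from below suffices; and (ii) once $c_1\geq 3$, in fact $c_{n+1}\geq (c_1-1)^2$ for all $n\geq 1$ (Lemma~\ref{lemma:growth:cn}), i.e.\ the later digits are not merely $\geq k$ but grow quadratically in $c_1$. It is this overshoot that lets the construction start from any $x\in(0,1/(k-1)]$ (where $c_1\geq k$) and never drop below $k$, without ever needing the residual to return to the original interval.

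So the gap is concrete: you would need to (a) replace the ill-posed ``reduce to $x'\in[0,1/(k-1)]$'' step with a precise alternating greedy rule, including the $+1$ adjustment on the $c$-side, (b) prove that the resulting digits are well defined at every stage (the paper's Lemmas~\ref{bk_correctly_defined} and~\ref{ck+1_correctly_defined}, which hinge on comparing $q_n,t_n$), and (c) prove a growth estimate of the type $b_n\geq c_n\geq(c_1-1)^2\geq k$ rather than a self-similarity statement. Without (a)--(c) the proposal is a plan rather than a proof, and the plan as written points in a direction (self-similar reduction of the target interval) that does not hold for these sets.
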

Note that for $k=2$ we recover the main result of \cite{MR0269603}.

We noticed above that $S(k) + S(k) \subseteq \left[0,\frac{2}{k}\right]$. 
As the conjecture about zero measure turned out to be false, one may conjecture the natural alternative that the sum is equal to the maximal interval at least for large enough $k$, that is
$$
S(k) + S(k) = \left[0,\frac{2}{k}\right].
$$
However, in our next result we show that this is false. We will provide a countable collection of intervals, such that endpoints of each of them belong to $S(k)+S(k)$, while none of the points in the interior lie in $S(k)+S(k)$.


For a fixed $k\geq1$ and any $n\geq1$, let $M_{k,n} = [\underbrace{k,\ldots,k}_{n \text{ times }}]$ and $m_{k,n} = [\underbrace{k,\ldots,k}_{n \text{ times }},1]$. Next, for a fixed $k\geq3$ and for any $n\geq1$, define an open interval 
$$
G_{k,n} = \begin{cases}
  \left( M_{k,n}+m_{k,n}, 2M_{k,n+1} \right) \quad \text{if $n$ is odd,}\\
    \left(2M_{k,n+1}, M_{k,n}+m_{k,n} \right) \quad \text{if $n$ is even.} 
\end{cases}
$$
One can check that the $G_{k,n}$ as an interval is defined correctly, that is the right-most point is larger than the left-most point for all $k\geq3$ and $n\geq1$.


We prove the following
\begin{theorem}\label{prop:gaps}
    For any $k\geq3$ we have
$$
S(k)+S(k) \subseteq \left[0,\frac2k\right] \setminus \bigcup_{n=1}^\infty G_{k,n}.
$$

\end{theorem}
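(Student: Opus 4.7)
The plan is first to prove a structural lemma: for every $n \geq 1$, the open interval with endpoints $m_{k,n}$ and $M_{k,n+1}$ is disjoint from $S(k)$. This follows from the continued-fraction cylinder structure. The interval in question lies in the interior of the level-$n$ cylinder $I_n$ with endpoints $m_{k,n}$ and $M_{k,n}$, whose points are exactly those $x \in [0,1]$ whose first $n$ partial quotients equal $k$. Writing such $x$ as $[k,\ldots,k,t]$ with $t = a_{n+1}(x) + \{\text{tail}\} \geq 1$, the map $t \mapsto x$ is monotone (increasing for odd $n$, decreasing for even $n$), and the claimed sub-interval corresponds to $t \in (1,k)$, which forces $a_{n+1}(x) \in \{1,\ldots,k-1\}$ and contradicts $x \in S(k)$.

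\textbf{Iterative ratchet.} Suppose for contradiction $x \in G_{k,n}$ admits a decomposition $x = a+b$ with $a,b \in S(k)$, WLOG $a \leq b$. For $n$ odd, from $x < 2M_{k,n+1}$ one gets $a < M_{k,n+1}$, and by the gap lemma $a \leq m_{k,n}$. Then $b = x-a > M_{k,n} + m_{k,n} - m_{k,n} = M_{k,n}$, and the gap lemma at the next (even) level gives $b \geq m_{k,n-1}$. Iterating, $a < x - b < 2M_{k,n+1} - m_{k,n-1}$, and the key inequality $2M_{k,n+1} < M_{k,n-1} + m_{k,n-1}$ yields $a < M_{k,n-1}$, hence $a \leq m_{k,n-2}$. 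Similarly $b > M_{k,n-2}$ by monotonicity of $M_{k,j} + m_{k,j}$ along odd $j$, giving $b \geq m_{k,n-3}$. Iterating down through the parity-matching levels $n, n-2, \ldots, 1$, we eventually reach $a \leq m_{k,1} = 1/(k+1)$, whence $b > M_{k,1} = 1/k$, contradicting $b \in S(k) \subseteq [0, 1/k]$. The case $n$ even is symmetric, with the roles of the endpoints of $G_{k,n}$ swapped and the direction of all inequalities reversed.

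\textbf{Computational identity.} All the inequalities above reduce to the single identity
\[
(M_{k,m} + m_{k,m}) - 2M_{k,m+2} = (-1)^m \cdot \frac{(k-1)^2 q_m - k q_{m-1}}{q_m (q_m + q_{m-1}) q_{m+2}},
\]
which follows from the convergent recurrence $p_j = kp_{j-1} + p_{j-2}$ and the identity $p_j q_{j-1} - p_{j-1} q_j = (-1)^{j-1}$. For $k \geq 3$ the numerator is strictly positive since $q_m/q_{m-1} \geq k > k/(k-1)^2$; thus the left-hand side is positive for $m$ even and negative for $m$ odd --- exactly the two opposing inequalities demanded by the ratchet at each parity. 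Telescoping this identity also yields the monotonicity of $M_{k,j} + m_{k,j}$ along each parity class of $j$.

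\textbf{Main obstacle.} The principal difficulty is the careful bookkeeping of the alternating ratchet: tracking which parity determines the next upper bound on $a$ versus lower bound on $b$, and applying the gap lemma at the correct level each time so that the process terminates exactly at the contradiction. Once the computational identity above is in hand, the chain of inequalities unfolds routinely.
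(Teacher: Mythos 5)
Your overall strategy---a ``gap lemma'' identifying the explicit gaps of the Cantor set $S(k)$, followed by an iterative pincer on the bounds $a\le m_{k,\cdot}$ and $b\ge m_{k,\cdot}$---is a genuinely different route from the paper's. The paper analyzes the partial-quotient structure of a hypothetical decomposition $x=c+b$ directly by cases: if both summands have at least $n$ partial quotients, it shows the first $n$ of each must equal $k$, forcing $x>2M_{k,n+1}$; if one summand has $m\le n-1$ quotients, it handles $m$ even and $m$ odd separately to push $x$ out of $G_{k,n}$. Your gap lemma is a clean structural observation and the ratchet a natural way to exploit it; both proofs in the end need a similar family of inequalities among $M_{k,j}+m_{k,j}$ and $2M_{k,j'}$, but they assemble them quite differently.

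There is, however, a real gap in your numerical bookkeeping: the single computational identity does not carry all the weight you assign it. The identity itself is correct, and it does yield the first ratchet step $2M_{k,n+1}<M_{k,n-1}+m_{k,n-1}$ (take $m=n-1$). But at each subsequent even level $j=n-3,n-5,\dots$ the ratchet needs $2M_{k,n+1}<M_{k,j}+m_{k,j}$, while the identity at index $j$ only supplies $2M_{k,j+2}<M_{k,j}+m_{k,j}$; since the even-indexed convergents $M_{k,j+2}$ increase toward $S_k^{-1}$, this is strictly weaker than what is required. Likewise, the monotonicity of $M_{k,j}+m_{k,j}$ along odd $j$ does not follow by ``telescoping'': for odd $m$ the identity gives $M_{k,m}+m_{k,m}<2M_{k,m+2}$, but also $M_{k,m+2}+m_{k,m+2}<2M_{k,m+2}$ (since $m_{k,m+2}<M_{k,m+2}$ for odd indices), so a comparison through $2M_{k,m+2}$ is inconclusive. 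Both missing inequalities are in fact true and easy to patch---for instance by proving the adjacent-level comparison $2M_{k,j+1}\lessgtr M_{k,j}+m_{k,j}$ according to the parity of $j$, which is exactly the chain the paper establishes when showing the $G_{k,n}$ are disjoint, and then invoking the ordering of odd versus even convergents to $S_k^{-1}$. Once those are in place your ratchet closes and the argument works, so the approach is salvageable; but the claim that everything reduces to the one identity is not accurate as stated.
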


For a final set of results, we first mention that for the sets $F(k)$ several authors (see, for example, \cite{MR1876272,MR1866013,MR0376545}) have studied arithmetic sums of two different Cantor sets
$$
F(m)+F(n).
$$
In 1975, Hlavka \cite{MR0376545} examined these sums and proved that
$$
\lambda \left(F(m)+F(n)\right)\geq1
$$
holds for $(m, n)$ equal to $(2, 7)$ or $(3, 4)$, but does not hold for $(m, n)$ equal
to $(2, 4)$. 

We consider an analogous question for the sums of two different Cantor sets $S(m)$ and $S(n)$. As the sets $S(k)$ are nested, that is
$$
[0,1]=S(1)\supset S(2)\supset\ldots\supset S(k)\supset S(k+1)\supset\ldots,
$$
we see that Theorem \ref{theorem:main} immediately implies the following result about $S(m)+S(n)$.
\begin{corollary}
    For any integers $m,n\geq2$, we have
    $$
 \lambda\Bigl( S(m)+S(n) \Bigl)\geq \frac{1}{\max(m,n)-1}.
 $$
\end{corollary}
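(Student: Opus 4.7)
The proof will be a short deduction from Theorem \ref{theorem:main} together with the explicit nesting
$$
[0,1]=S(1)\supset S(2)\supset\cdots\supset S(k)\supset S(k+1)\supset\cdots
$$
that is recalled just before the statement. The plan is therefore purely set-theoretic and involves no new continued-fraction analysis.

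First, I would set $k := \max(m,n)$ and assume without loss of generality that $n = k \geq m \geq 2$ (the other case is symmetric because $S(m)+S(n)=S(n)+S(m)$). From the nesting, $m \leq n = k$ gives $S(m) \supseteq S(k)$, so
$$
S(m)+S(n) \;\supseteq\; S(k)+S(k).
$$
Next, I would invoke Theorem \ref{theorem:main} with this value of $k$, which yields
$$
S(k)+S(k) \;\supseteq\; \left[0,\tfrac{1}{k-1}\right].
$$
Chaining the two inclusions gives $S(m)+S(n) \supseteq [0,\tfrac{1}{\max(m,n)-1}]$, and taking Lebesgue measure yields the claimed bound.

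There is essentially no obstacle: the only thing one must be slightly careful about is that the nesting $S(m)\supseteq S(k)$ for $m\le k$ follows directly from the definition of $S(\cdot)$, since requiring all partial quotients to be at least $k$ is a stronger condition than requiring them to be at least $m$, and the conventions about rationals and the point $0$ stated right after the definition of $S(k)$ are consistent across all $k$. Everything else is Theorem \ref{theorem:main} quoted verbatim.
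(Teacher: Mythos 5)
Your proof is correct and follows exactly the route the paper intends: the corollary is presented as an immediate consequence of the nesting $S(1)\supset S(2)\supset\cdots$ together with Theorem \ref{theorem:main}, which is precisely the deduction you spell out. Nothing more is needed.
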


As it will be seen, our method also allows us to improve this 'trivial' bound under some conditions on $m,n$.
\begin{theorem}\label{main:theorem2}
    If $m,n$ are integers, such that 
$3 \leq m < n\leq (m-1)^2$, then
$$ 
S(m)+S(n) \supseteq  \left[0, \frac{1}{m-1}\right].
$$
\end{theorem}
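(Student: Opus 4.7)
The plan is to adapt the constructive algorithm behind Theorem~\ref{theorem:main} to the asymmetric setting, checking that the extra restriction $b_i \ge n$ on the second continued fraction remains compatible with the step-by-step covering property precisely when $n \le (m-1)^2$. Given a target $x \in [0, 1/(m-1)]$, the symmetric algorithm produces, stage by stage, partial quotients $a_1,a_2,\ldots\ge m$ and $b_1,b_2,\ldots\ge m$ so that $y = [0;a_1,a_2,\ldots]\in S(m)$, $z = [0;b_1,b_2,\ldots]\in S(m)$, and $y+z = x$. My modified algorithm would replace the lower bound on the $b$-side by $n$ and invoke the original algorithm's machinery to show that a feasible choice still exists at every stage.

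Concretely, at stage $i$ the algorithm isolates a residual sub-target $x_i$ lying in a range determined by the partial quotients already chosen, and the pair $(a_i,b_i)$ determines in which sub-interval of that range $x_i$ is placed. Feasibility reduces to showing that the union of sub-intervals indexed by all admissible pairs covers the residual range. Raising the lower bound for $b_i$ from $m$ to $n$ removes sub-intervals and can enlarge the spacing between the remaining ones; the quantitative content of $n \le (m-1)^2$ is exactly that this spacing (controlled by jumps of size roughly $1/n^2$ on the $z$-side) stays within the coverage length provided by varying the admissible $a_i \ge m$ (of order $1/(m-1)^2$). Comparing the endpoints of the sub-intervals corresponding to adjacent choices of $b_i$ should yield an inequality that simplifies to $n-1 \le (m-1)^2 - 1$, or equivalently $n \le (m-1)^2$ up to the boundary cases.

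The technical heart is to package this into an inductive invariant preserved by one step of the recursion: a statement of the form ``if the rescaled residual target lies in $[0,1/(m-1)]$ (or an explicit Möbius image thereof dictated by the previously chosen $a_j,b_j$), then some pair $(a_i,b_i)$ with $a_i\ge m$ and $b_i\ge n$ leaves the new residual target in the same admissible range.'' The base case is then the initial target $x\in[0,1/(m-1)]$, and the sequences produced in the limit give admissible $y\in S(m)$ and $z\in S(n)$ with $y+z=x$.

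The main obstacle I expect is the analysis of the ``boundary'' configurations of the recursion, where the residual target sits near an endpoint of its admissible interval and one of $a_i$ or $b_i$ is forced to be its minimal value or to tend to infinity. These are the cases that dictate the sharp form of the hypothesis: a weaker bound such as $n\le m^2$ will leave uncovered gaps near such endpoints, whereas $n \le (m-1)^2$ is precisely tight to absorb them. Once this is verified, the rest of the argument parallels the proof of Theorem~\ref{theorem:main} with $k=m$, reading off the target interval $[0,1/(m-1)]$ from the range of residual targets preserved by the invariant.
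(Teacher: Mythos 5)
Your proposal rests on a misreading of what kind of algorithm is used in Theorem~\ref{theorem:main}. You describe it as a covering/feasibility argument (Hall--Astels style), where at each stage one must check that the union of sub-intervals indexed by admissible pairs $(a_i,b_i)$ covers a residual range, and you propose to \emph{modify} the algorithm by raising the lower bound on the $b$-side from $m$ to $n$ and re-establishing coverage. But the paper's construction in Section~\ref{sec:main:construction} is a deterministic greedy procedure (iterated best approximation via~\eqref{algorithm:bn}, \eqref{algorithm:cn}); it produces a \emph{unique} decomposition and there is no family of sub-intervals whose coverage needs to be verified. Because you model the argument on the wrong template, the heuristic you offer for why $n \le (m-1)^2$ should be sharp --- comparing jump sizes $\sim 1/n^2$ against a coverage length $\sim 1/(m-1)^2$ --- points in the wrong direction (it would suggest a lower bound on $n$, not an upper bound), and you never produce a concrete inductive invariant or a verifiable inequality.

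The actual proof needs no modification of the algorithm at all, and that is the idea you are missing. For $x\in(0,1/(m-1)]$ the original algorithm forces $c_1\ge m\ge 3$. Lemma~\ref{lemma:growth:cn} then gives $c_{i}\ge(c_1-1)^2\ge(m-1)^2$ for all $i\ge 2$, and Lemma~\ref{lemma:growth:bn} gives $b_i\ge c_i$ for all $i$; in particular $b_1\ge c_1(c_1-1)\ge m(m-1)\ge(m-1)^2$. So the \emph{same} decomposition already has $c\in S(m)$ and $b_i\ge(m-1)^2$ for every $i$, whence $b\in S(n)$ precisely when $n\le(m-1)^2$. That inequality is the threshold below which the growth-lemma bound on the $b$-partial-quotients already places $b$ in $S(n)$ for free --- it has nothing to do with covering spacings. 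Correctness of the algorithm and convergence are inherited verbatim from the proof of Theorem~\ref{theorem:main}; there are no ``boundary configurations'' to analyze separately.
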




For some particular values of $m$ and $n$, we can get an optimal statement, showing that the sum $S(m)+S(n)$ is exactly equal to some interval. We provide one example of such result.
\begin{theorem}\label{thm:square}
    For any integer $m\geq2$, one has
    $$
S(m)+S(m^2) = \left[ 0, \frac{m+1}{m^2} \right].
$$
\end{theorem}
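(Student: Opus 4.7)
The inclusion $S(m) + S(m^2) \subseteq \left[0, \frac{m+1}{m^2}\right]$ is immediate, since $\max S(m) = \tfrac{1}{m}$ (realized by $[m]$) and $\max S(m^2) = \tfrac{1}{m^2}$ (realized by $[m^2]$), and both sets contain $0$. The content of the theorem is the reverse inclusion.

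For the reverse inclusion, my plan is to adapt the constructive scheme underlying Theorem \ref{theorem:main} to this asymmetric setting. Given any $x \in \left(0, \frac{m+1}{m^2}\right]$, I would build $s_1 = [a_1, a_2, \ldots] \in S(m)$ and $s_2 = [b_1, b_2, \ldots] \in S(m^2)$ simultaneously, choosing at each level $j$ partial quotients $a_j \geq m$ and $b_j \geq m^2$ so that after peeling off the chosen prefixes the ``residual target'' (obtained via the standard Möbius inversion for continued fractions) lies again in a normalized interval of the form $\left[0, \frac{m+1}{m^2}\right]$, closing a self-similar recursion whose limit yields the required pair.

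The crux of the argument is a first-level covering lemma: the outer-hull cells
\[
I_{a,b} \;=\; \left[\frac{1}{a + 1/m} + \frac{1}{b + 1/m^2},\; \frac{1}{a} + \frac{1}{b}\right], \qquad a \geq m,\; b \geq m^2,
\]
together cover $\left(0, \frac{m+1}{m^2}\right]$, and, moreover, every $x \in I_{a,b}$ is actually realized by tails $(y_1, y_2) \in (S(m)\cup\{0\}) \times (S(m^2)\cup\{0\})$ solving $\frac{1}{a+y_1} + \frac{1}{b+y_2} = x$. The realization claim is the inductive hypothesis applied to the rescaled residual target, so the entire argument pivots on the covering claim. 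A naive enumeration in which one fixes $a$ and varies $b$ is insufficient — direct computation shows that for large $a$ the cells $I_{a,b}$ and $I_{a,b+1}$ are disjoint — so the cover must be established by varying $a$ and $b$ together, with cells of small $a$ and moderately large $b$ bridging gaps left by cells of larger $a$.

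The main obstacle is precisely this no-gap verification, which is the arithmetic heart of the theorem. The equation $n = m^2$ is exactly what makes the overlap inequalities work at the extremal indices $a = m,\ b = m^2$; concretely, I would check that the right endpoint $\frac{1}{m} + \frac{1}{m^2 + 1}$ of $I_{m, m^2 + 1}$ exceeds the left endpoint $\frac{1}{m + 1/m} + \frac{1}{m^2 + 1/m^2}$ of $I_{m, m^2}$ (which reduces to an explicit polynomial inequality $(m^2+1)(m^4+1) \geq m(m^4-1)$, straightforward for $m\geq 2$), and analogous inequalities for transitions $I_{a,b} \leftrightarrow I_{a+1,b'}$ where the bridging index $b'$ is chosen as a function of $(a,b)$. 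Once the explicit chain of overlaps is established, the recursion closes by induction on the depth of the partial-quotient expansion, with rational $x$ yielding terminating expansions (permitted by the paper's conventions on $S(k)$) and irrational $x$ yielding convergent infinite expansions.
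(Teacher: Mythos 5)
Your approach — covering $\left(0,\frac{m+1}{m^2}\right]$ by Hall-type cells $I_{a,b}$ and recursing — is not the route the paper takes, and as written it has a real gap at the \emph{realization claim}. You assert that every $x\in I_{a,b}$ is realized by tails $(y_1,y_2)\in(S(m)\cup\{0\})\times(S(m^2)\cup\{0\})$ solving $\frac{1}{a+y_1}+\frac{1}{b+y_2}=x$, and that this is "the inductive hypothesis applied to the rescaled residual target." But there is no single rescaled residual target here: that equation is one constraint in two unknowns, so the admissible residuals form a one-parameter curve in $[0,1/m]\times[0,1/m^2]$, and showing this curve actually meets the product Cantor set is a sum-of-two-Cantor-sets problem of essentially the same difficulty as the original statement, now for the M\"obius-distorted sets $\frac{1}{a+(S(m)\cup\{0\})}$ and $\frac{1}{b+(S(m^2)\cup\{0\})}$. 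The recursion does not close as stated; you would need to prove a uniform statement over the whole family of distorted pairs, which is precisely the thickness-type argument that is not straightforward in this regime. The covering lemma itself is also only sketched — you verify one overlap near $(a,b)=(m,m^2)$ and gesture at "bridging cells," but the bridging index $b'=b'(a,b)$ and the no-gap chain are never actually produced. You also do not address $m=2$, which the paper has to treat separately.

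By contrast, the paper does not recurse on cells at all. It runs the fixed alternating algorithm of Section \ref{sec:main:construction} — $c_1=a_1(x)+1$, then $b_n=a_n(x-p_n/q_n)$ via \eqref{algorithm:bn}, then $c_{n+1}=a_{n+1}(x-s_n/t_n)+1$ via \eqref{algorithm:cn} — whose correctness and convergence were already established for Theorem \ref{theorem:main}, and then uses the quantitative lower bounds on partial quotients it produces (Lemma \ref{lemma:growth:cn} together with \eqref{ineq:ck:bound} and \eqref{ineq:ck+1:continuants}) to force $c_n\geq m$ and $b_n\geq m^2$ for all $n$, splitting into the cases $c_1\geq m+1$ and $c_1=m$, and handling $m=2$ by first invoking Theorem \ref{main:theorem2} on $[0,1/2]$ and treating $(1/2,3/4]$ directly. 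The point is that each $b_n$ and $c_n$ is uniquely determined by the algorithm — there is no joint choice of $(y_1,y_2)$ to resolve — and that determinacy is exactly what your cell-based scheme leaves open.
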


Finally, we mention that the topic of sums and products of Cantor sets, not necessarily formulated in terms of continued fractions, is also well-studied, see, for example, \cite{MR2534296, MR4405675, MR1267692, MR3639301,MR3976578} and references therein. 

The paper is organized as follows. In Section \ref{sec:prelim} we provide some basic facts from continued fraction theory and the necessary notation. In Section \ref{sec:main:construction} we describe the algorithm of expressing a given number as a sum of two continued fractions with "large" partial quotients and prove several properties of it. In Section \ref{sec:proofs} we deduce Theorems \ref{theorem:main}, \ref{main:theorem2}, \ref{thm:square} from this algorithmic procedure. In Section \ref{sec:gaps} we prove Theorem \ref{prop:gaps} about the gaps in $S(k)+S(k)$. In Section \ref{sec:remarks} we give some concluding remarks and formulate open problems. 

\section{Preliminaries}\label{sec:prelim}

For $x\in [0,1]$ we consider its continued fraction expansion
$$x=[a_1,a_2,\ldots,a_n,\ldots]=  \cfrac{1}{a_1+\cfrac{1}{a_2+\cdots}}, \,\,\, a_j \in \mathbb{Z}_+$$
which is unique and infinite when $x\not\in\mathbb{Q}$  and finite for rational $x$. 
Each rational $x$ has just two representations
$$
x=[a_1,a_2,\ldots,a_{n-1}, a_n]  
\,\,\,\,\,\text{and}\,\,\,\,\,
x=[a_1,a_2,\ldots,a_{n-1}, a_n-1,1]
,
\,\,\,\, \text{where}\,\,\,\, a_n \ge 2
.
$$
Going forward, when talking about rational numbers, we always think of them as having the last partial quotient to be at least $2$. We use the notation $a_n(x)$ for a function which returns $n$th partial quotient of a number $x$.

Also, for any $n\in\N$ and any $\left(c_{1},\ldots,c_{n}\right)$ where $c_{i}\in\mathbb{N}$ for $\, 1\leq i \leq n$, we define $\textit{a cylinder of order }n$, denoted by $I_{n}\left(c_{1},\ldots,c_{n}\right)$, as
\begin{align*}
I_{n}\left(c_{1},\ldots,c_{n}\right):=\left\{x\in\left[0,1\right):a_{1}\left(x\right)=c_{1},\ldots,a_{n}\left(x\right)=c_{n}\right\}.
\end{align*}
We would use the following basic facts about cylinders.
\begin{proposition}\label{range}
For any $n\ge 1$ and $\left(c_{1},\ldots,c_{n}\right)\in\mathbb{N}^{n}$, we have
\begin{equation}I_{n}\left(c_{1},\ldots,c_{n}\right)=\left\{
  \begin{array}{ll}
   \left[\dfrac{p_{n}}{q_{n}},\dfrac{p_{n}+p_{n-1}}{q_{n}+q_{n-1}}\right) &\textrm{if }n\textrm{ is even},\\
   \left(\dfrac{p_{n}+p_{n-1}}{q_{n}+q_{n-1}},\dfrac{p_{n}}{q_{n}}\right] &\textrm{if }n\textrm{ is odd}.\\
  \end{array}
\right.
\end{equation}
Therefore, the length of it is given by
\begin{equation}\label{length}
\left|I_{n}\left(c_{1},\ldots,c_{n}\right)\right|=\dfrac{1}{q_{n}\left(q_{n}+q_{n-1}\right)}.
\end{equation}
\end{proposition}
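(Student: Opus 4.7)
The plan is to realise the cylinder $I_n(c_1,\ldots,c_n)$ as the image of $[0,1)$ under an explicit Möbius map coming from the ``tail'' of the continued fraction. Concretely, for $t\in[0,1)$ set
$$\phi(t) := [c_1,c_2,\ldots,c_{n-1},c_n+t].$$
Using the recurrences $p_j=c_j p_{j-1}+p_{j-2}$ and $q_j=c_j q_{j-1}+q_{j-2}$ in the standard matrix form, one checks the closed expression
$$\phi(t) = \frac{(c_n+t)p_{n-1}+p_{n-2}}{(c_n+t)q_{n-1}+q_{n-2}} = \frac{p_n + t p_{n-1}}{q_n + t q_{n-1}}.$$
First I would verify that $x\mapsto t$, with $t=[0;a_{n+1}(x),a_{n+2}(x),\ldots]$, is a bijection between $I_n(c_1,\ldots,c_n)$ and $[0,1)$: given $x$ with $a_1(x)=c_1,\ldots,a_n(x)=c_n$, the tail $t$ automatically lies in $[0,1)$, and conversely every $t\in[0,1)$ produces via $\phi$ a number whose first $n$ partial quotients are $c_1,\ldots,c_n$.

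Next I would use the classical determinant identity $p_n q_{n-1}-p_{n-1}q_n=(-1)^{n-1}$ to differentiate $\phi$:
$$\phi'(t) = \frac{p_{n-1} q_n - p_n q_{n-1}}{(q_n+t q_{n-1})^2} = \frac{(-1)^n}{(q_n+t q_{n-1})^2}.$$
Thus $\phi$ is strictly increasing when $n$ is even and strictly decreasing when $n$ is odd. Combined with the bijection in Step~1 and the endpoint values $\phi(0)=p_n/q_n$ and $\phi(1)=(p_n+p_{n-1})/(q_n+q_{n-1})$, this gives exactly the two claimed descriptions of the cylinder. The attained endpoint is always $p_n/q_n$ (it corresponds to $t=0$ and lies in the cylinder because, under the convention of Section~\ref{sec:prelim}, the rational $p_n/q_n$ is written as $[c_1,\ldots,c_n]$ with $c_n\geq 2$ and hence has $a_i(p_n/q_n)=c_i$ for $i\leq n$); the other endpoint is excluded, since $t=1$ lies outside the domain of $\phi$.

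The length formula \eqref{length} is then immediate from the same determinant identity:
$$\bigl|\phi(1)-\phi(0)\bigr| = \left|\frac{p_n+p_{n-1}}{q_n+q_{n-1}}-\frac{p_n}{q_n}\right| = \frac{|p_{n-1}q_n-p_n q_{n-1}|}{q_n(q_n+q_{n-1})} = \frac{1}{q_n(q_n+q_{n-1})}.$$
The only subtle step in this plan is the boundary analysis deciding which endpoint of the cylinder is attained; this is precisely where the convention that every rational be written with last partial quotient at least $2$ is used, singling out $p_n/q_n$ as the closed endpoint irrespective of the parity of $n$.
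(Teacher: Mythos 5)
The paper does not prove Proposition~\ref{range}; it simply cites Khinchin's book, so there is no in-paper argument to compare against. Your derivation via the M\"obius reparametrisation $\phi(t)=[c_1,\ldots,c_{n-1},c_n+t]$ of the tail $t\in[0,1)$, with the orientation read off from $\phi'(t)=(-1)^n/(q_n+tq_{n-1})^2$ and the length from the determinant identity, is the standard textbook proof and is essentially correct.

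One subtlety in the boundary step deserves attention. You argue that $p_n/q_n$ belongs to the cylinder because ``the rational $p_n/q_n$ is written as $[c_1,\ldots,c_n]$ with $c_n\geq 2$.'' But the proposition is stated for arbitrary $(c_1,\ldots,c_n)\in\N^n$, so $c_n=1$ is allowed, and in that case (for $n\geq 2$) the canonical expansion of $p_n/q_n=[c_1,\ldots,c_{n-1},1]$ is $[c_1,\ldots,c_{n-2},c_{n-1}+1]$, which has only $n-1$ partial quotients. Under the paper's convention for $a_i$ on rationals, $p_n/q_n$ then fails to lie in $I_n(c_1,\ldots,c_n)$, so your claim that $t=0$ contributes a point of the cylinder --- and hence the half-closed description itself --- is off by a single point when $c_n=1$ (e.g.\ $I_2(1,1)=(1/2,2/3)$, not $[1/2,2/3)$). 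This is harmless for the rest of the paper, which only ever invokes the proposition with all digits at least $k\geq 2$, but strictly speaking the hypothesis $c_n\geq 2$ is needed for the statement to be literally correct, and your bijection argument at $t=0$ silently relies on it.
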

The proof can be found in classic continued fractions books, see, for example, \cite{Khinchin_book}.

We also use the notation $\langle c_1,\ldots,c_n \rangle$ to denote a {\it continuant}, that is the denominator $q_n:=q_n(c_1,\ldots,c_n)$ of a continued fraction $[c_1,\ldots,c_n]$.

Using this notation, we recall the classical formula
$$
\langle c_1,\ldots,c_n \rangle = \langle c_1,\ldots,c_k \rangle \langle c_{k+1},\ldots,c_n \rangle  + \langle c_1,\ldots,c_{k-1} \rangle \langle c_{k+2},\ldots,c_n \rangle
$$
for $1\leq k \leq n-1$. For $k=n-1$ we use the convention $\langle c_{k+2},\ldots,c_n \rangle=1,$ as $c_{n+1}$ is undefined.

The following statement allows us to bound the ratio of two continuants of the same length under the condition that partial quotients in one are greater or equal to the partial quotients in the other one.



\begin{lemma}\label{lemma:quotient:tnqn}
Consider two integer sequences $\{c_n\}_{n\geq1}$ and $\{b_n\}_{n\geq1}$, satisfying $b_n \geq c_n$ for all $n$.

Then for any $n\geq2$ and any $1\leq k\leq n-1$ one has
$$
\frac{\langle b_1,\ldots,b_n\rangle}{\langle c_1,\ldots,c_n\rangle} \geq 1 + \frac{\langle b_1,\ldots, b_k\rangle - \langle c_1,\ldots,c_k\rangle + (\langle b_1,\ldots, b_{k-1}\rangle - \langle c_1,\ldots,c_{k-1}\rangle ) \frac{1}{c_{k+1}+1}}{\langle c_1,\ldots,c_k\rangle +\langle c_1,\ldots,c_{k-1} \rangle \frac{1}{c_{k+1}}}.
$$
\end{lemma}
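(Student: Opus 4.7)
The plan is to expose the dependence on the cut point $k$ by applying the continuant composition formula stated just before the lemma, and then to squeeze the resulting ratio using only term-wise bounds. Introduce the shorthand $P:=\langle c_1,\ldots,c_k\rangle$, $Q:=\langle c_1,\ldots,c_{k-1}\rangle$, $R:=\langle c_{k+1},\ldots,c_n\rangle$, $S:=\langle c_{k+2},\ldots,c_n\rangle$ (with the convention $\langle\,\rangle=1$ when $k=n-1$), and the analogous $\tilde P,\tilde Q,\tilde R,\tilde S$ built from the $b_i$'s. The composition formula rewrites the ratio of interest as
$$
\frac{\langle b_1,\ldots,b_n\rangle}{\langle c_1,\ldots,c_n\rangle}=\frac{\tilde P\tilde R+\tilde Q\tilde S}{PR+QS}.
$$

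First I would replace $\tilde R,\tilde S$ by $R,S$ in the numerator. Because every continuant is nondecreasing in each of its arguments and $b_i\geq c_i$, we have $\tilde R\geq R$ and $\tilde S\geq S$; hence $\tilde P\tilde R+\tilde Q\tilde S\geq\tilde PR+\tilde QS$. Dividing numerator and denominator of the resulting lower bound by $R$ and setting $t:=S/R$ gives
$$
\frac{\langle b_1,\ldots,b_n\rangle}{\langle c_1,\ldots,c_n\rangle}\geq\frac{\tilde P+\tilde Q\,t}{P+Q\,t}=1+\frac{(\tilde P-P)+(\tilde Q-Q)\,t}{P+Q\,t}.
$$

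Next I would pin down the range of $t$. Applying the continuant recursion at the leftmost entry of $R$ yields $R=c_{k+1}S+\langle c_{k+3},\ldots,c_n\rangle$, and since $0\leq\langle c_{k+3},\ldots,c_n\rangle\leq S$ (the lower bound is trivial, the upper bound follows from another application of the recursion to $S$, with boundary cases $n=k+1,k+2$ checked directly from the conventions), we obtain $c_{k+1}\leq R/S\leq c_{k+1}+1$, i.e.
$$
\frac{1}{c_{k+1}+1}\leq t\leq\frac{1}{c_{k+1}}.
$$

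The final step is the cleanest: since $\tilde P-P\geq 0$ and $\tilde Q-Q\geq 0$, the numerator $(\tilde P-P)+(\tilde Q-Q)t$ is minimised at the left endpoint $t=1/(c_{k+1}+1)$; since $Q\geq 0$, the denominator $P+Qt$ is maximised at the right endpoint $t=1/c_{k+1}$. Substituting these two values simultaneously only weakens the estimate, and delivers exactly the right-hand side of the claimed inequality.

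The only place where something could go wrong is the analysis of $t$: a direct monotonicity analysis of $(\tilde P-P+(\tilde Q-Q)t)/(P+Qt)$ in $t$ is genuinely two-sided (its sign depends on $(\tilde Q-Q)P-Q(\tilde P-P)$), so I deliberately avoid it and instead bound numerator and denominator separately. This is the main conceptual point of the proof; the rest is bookkeeping, including the small boundary verifications for $k=n-1$ where some continuants reduce to empty products.
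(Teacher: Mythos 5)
Your proof is correct and takes essentially the same route as the paper: you apply the continuant composition formula and then replace the tail continuants $\tilde R,\tilde S$ by $R,S$ (the paper does these two steps in the opposite order, replacing $b_{k+1},\ldots,b_n$ by $c_{k+1},\ldots,c_n$ via monotonicity first and then splitting, which amounts to the same estimate), and you then bound the ratio $t=\langle c_{k+2},\ldots,c_n\rangle/\langle c_{k+1},\ldots,c_n\rangle$ between $1/(c_{k+1}+1)$ and $1/c_{k+1}$ and substitute the two endpoints into numerator and denominator separately. Your explicit derivation of the two-sided bound on $t$ and your remark that a naive monotonicity-in-$t$ argument would not work are a welcome elaboration of a step the paper leaves implicit.
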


\begin{proof}
Continuant is a strictly increasing function of its arguments, so we have
\begin{align}
\frac{\langle b_1,\ldots,b_n\rangle}{\langle c_1,\ldots,c_n\rangle} &\geq \frac{\langle b_1,\ldots,b_k,c_{k+1},\ldots,c_n\rangle}{\langle c_1,\ldots,c_k,c_{k+1},\ldots,c_n\rangle} \\
&= \frac{\langle b_1,\ldots, b_k\rangle \langle c_{k+1},\ldots,c_n\rangle + \langle b_1,\ldots, b_{k-1}\rangle \langle c_{k+2},\ldots,c_n\rangle }{\langle c_1,\ldots, c_k\rangle \langle c_{k+1},\ldots,c_n\rangle + \langle c_1,\ldots, c_{k-1}\rangle \langle c_{k+2},\ldots,c_n\rangle} \\
& =1 + \frac{(\langle b_1,\ldots, b_k\rangle - \langle c_1,\ldots, c_k\rangle) \langle c_{k+1},\ldots,c_n\rangle + (\langle b_1,\ldots, b_{k-1}\rangle-\langle c_1,\ldots, c_{k-1}\rangle) \langle c_{k+2},\ldots,c_n\rangle }{\langle c_1,\ldots, c_k\rangle \langle c_{k+1},\ldots,c_n\rangle + \langle c_1,\ldots, c_{k-1}\rangle \langle c_{k+2},\ldots,c_n\rangle} \\
& = 1 + \frac{\langle b_1,\ldots, b_k\rangle - \langle c_1,\ldots, c_k\rangle  + (\langle b_1,\ldots, b_{k-1}\rangle-\langle c_1,\ldots, c_{k-1}\rangle) \frac{\langle c_{k+2},\ldots,c_n\rangle}{\langle c_{k+1},\ldots,c_n\rangle} }{\langle c_1,\ldots, c_k\rangle  + \langle c_1,\ldots, c_{k-1}\rangle \frac{ \langle c_{k+2},\ldots,c_n\rangle}{\langle c_{k+1},\ldots,c_n\rangle}}  \\
& \geq 1 + \frac{\langle b_1,\ldots, b_k\rangle - \langle c_1,\ldots,c_k\rangle + (\langle b_1,\ldots, b_{k-1}\rangle - \langle c_1,\ldots,c_{k-1}\rangle ) \frac{1}{c_{k+1}+1}}{\langle c_1,\ldots,c_k\rangle +\langle c_1,\ldots,c_{k-1} \rangle \frac{1}{c_{k+1}}} .
\end{align}
\end{proof}

In particular, by letting $k=1$ in Lemma \ref{lemma:quotient:tnqn} we get
$$
\frac{\langle b_1,\ldots,b_n\rangle}{\langle c_1,\ldots,c_n\rangle} \geq 1 + \frac{b_1-c_1}{c_1+1/c_2}.
$$


\section{The main construction}\label{sec:main:construction}

We will explicitly provide an interval and an algorithm, by which you can decompose any number from this interval into a sum of two continued fractions with no small partial quotients.

First, let us introduce relevant definitions and notation.
For infinite (or finite) continued fractions 
$$c=[c_1,\ldots,c_n,\ldots] \quad \text{ and }\quad
 b=[b_1,\ldots,b_n,\ldots],$$
denote their finite truncations by 
\begin{equation}\label{def:pq:st}
\frac{p_n}{q_n} = [c_1,\ldots,c_n] \quad \text{ and } \quad \frac{s_n}{t_n} = [b_1,\ldots,b_n]
\end{equation}
respectively.




The number $x=0$ is trivially an element of $S(k)+S(k)$ for any $k\geq1$. For a given $x\in(0,1]$ we want to construct two sequences $\frac{p_n}{q_n}=[c_1,\ldots,c_n]$ and $\frac{s_n}{t_n}=[b_1,\ldots,b_n]$ of rational numbers, such that 
$$x=\lim_{n\to\infty} \left( \frac{p_n}{q_n} + \frac{s_n}{t_n} \right)$$
with the property that $p_n/q_n$ is a good approximation to $x- s_{n-1}/t_{n-1}$, and simultaneously $s_n/t_n$ is a good approximation to $x-p_n/q_n$.




Set $c_1  = a_1(x)+1$ and for $n\geq1$ iteratively define
\begin{equation}\label{algorithm:bn}
b_n = a_n \left(    x - \frac{p_n}{q_n}  \right)
\end{equation}
and
\begin{equation}\label{algorithm:cn}
c_{n+1} = a_{n+1} \left(    x - \frac{s_n}{t_n}  \right)  +1.
\end{equation}

\begin{remark}\label{remark:rational:termination}
For rational numbers $x$ for some $n\in\N$ the step  \eqref{algorithm:cn} can be undefined. It can happen if $x-s_n/t_n$ has strictly less than $n+1$ partial quotients. In this case, we terminate the algorithm at the step this situation occurs.

For example, let $x=1$. In this case $c_1=a_1(1)+1 = 2$ and $b_1 = a_1(1/2) = 2$. Then $c_2 = a_2(1/2)$, which is undefined, as the number $1/2=[2]$ has only one partial quotient (recall that we fixed continued expansions of rationals to have the last partial quotient equal at least $2$). 
\end{remark}

For the algorithm defined by \eqref{algorithm:bn}, \eqref{algorithm:cn} we claim that 
\begin{equation}\label{eq:convergence}
\lim_{n\to\infty} \left( \frac{p_n}{q_n} + \frac{s_n}{t_n} \right) = x
\end{equation}
with $c_i\geq k$ and $b_i\geq k$ for all $i\in\N$.



We need to check the following three properties.
\begin{itemize}
    \item  Correctness of the definition of the algorithm;

    \item Convergence to the number $x$;
    
    \item 'Large' size of the resulting partial quotients.
\end{itemize}

\begin{remark}
    If the algorithm terminates in a finite number of steps as per Remark \ref{remark:rational:termination}, then we drop the limit from \eqref{eq:convergence} and claim that $c_i\geq k$ and $b_i\geq k$ for all $1\leq i \leq n$.

It will be evident that in this case all three properties persist and we would just get a decomposition of $x$ into a sum of two rational numbers $b$ and $c$.
\end{remark}

First, let us check that the algorithm is defined correctly, that is we need to check that a number
$
x - p_n/q_n,
$
which defines a partial quotient $b_n$, satisfies
\begin{equation}\label{condition:cylinder:x-pq}
a_i \left( x - \frac{p_n}{q_n} \right) = b_i  \text{ for } 1\leq i \leq n-1
\end{equation}
and that a number 
$
x-s_n/t_n,
$
which defines a partial quotient $c_{n+1}$, satisfies
\begin{equation}\label{condition:cylinder:x-st}
a_i \left( x - \frac{s_n}{t_n} \right) = c_i  \text{ for } 1\leq i \leq n.
\end{equation}

We shall prove this by induction. The first partial quotient $c_1$ is clearly correctly defined, this is a unique integer number, satisfying inequality
\begin{equation}\label{def:c1}
\frac{1}{c_1} < x \leq \frac{1}{c_1-1}.
\end{equation}
The first partial quotient $b_1$ of a number $b$ is also correctly defined, this is a unique integer satisfying inequality
\begin{equation}\label{def:b1}
\frac{1}{b_1+1} < x-\frac{1}{c_1} \leq \frac{1}{b_1}.
\end{equation}
It exists because $x-\frac{1}{c_1}$ is a positive number. If $x-\frac{1}{c_1}=\frac{1}{b_1}$, then we terminate the algorithm at this step. Otherwise, we proceed further.

Next, by our algorithm, $c_2$ is defined as a solution to the inequality
$$
\frac{p_2-p_1}{q_2-q_1} \leq x-\frac{1}{b_1} < \frac{p_2}{q_2}.
$$
We want to show that $c_2$, satisfying the inequality above, always exists.

For $c_2$ to be correctly defined, we want
$$
x-\frac{1}{b_1} \in I_1(c_1).
$$
This is equivalent to 
$$
\frac{1}{c_1+1} < x-\frac{1}{b_1} \leq  \frac{1}{c_1}.
$$
The inequality \eqref{def:b1} implies that the right-hand side of the latter inequality is trivially satisfied while showing the left-hand side is enough to prove that
$$
\frac{1}{c_1+1}  \leq \frac{1}{c_1} - \frac{1}{b_1(b_1+1)}.
$$
To satisfy this, it is enough to prove that $b_1\geq c_1.$
From inequalities \eqref{def:c1} and \eqref{def:b1} we easily get
$$
b_1+1 > c_1(c_1-1),
$$
or, as both sides of this inequality are integer,
$$
b_1 \geq c_1(c_1-1) \geq c_1.
$$
The last inequality is because $c_1\geq2$. Thus $c_2$ is always correctly defined.

Now, assume that $c_1,b_1,\ldots,b_{k-1},c_{k}$ are correctly defined. We want to show that $b_{k}$ and $c_{k+1}$ would also be correctly defined. First, let us prove the induction step for $b_k$.

\begin{lemma}\label{bk_correctly_defined}
    Assume that $c_1,b_1,\ldots,b_{k-1},c_k$ are correctly defined. Then $b_k$ is also correctly defined.
\end{lemma}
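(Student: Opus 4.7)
The goal is to verify the cylinder containment
$$x - \frac{p_k}{q_k} \in I_{k-1}(b_1,\ldots,b_{k-1}),$$
the analogue of \eqref{condition:cylinder:x-pq}, which ensures that $b_k := a_k(x - p_k/q_k)$ agrees with the previously built $b_1,\ldots,b_{k-1}$ on the first $k-1$ partial quotients. My plan is to follow the two-step template of the base case: first use the hypothesis on $c_k$ to locate $x - p_k/q_k$ near $s_{k-1}/t_{k-1}$, and then compare with the length of the target cylinder.

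Since the algorithm sets $c_k = a_k(x - s_{k-1}/t_{k-1}) + 1$, the assumption that $c_k$ is correctly defined is equivalent to
$$x - \frac{s_{k-1}}{t_{k-1}} \in I_k(c_1,\ldots,c_{k-1}, c_k - 1).$$
By Proposition \ref{range} this cylinder is an interval with endpoints $p_k/q_k$ and $(p_k - p_{k-1})/(q_k - q_{k-1})$, oriented according to the parity of $k$; hence $x - p_k/q_k$ lies on a specific side of $s_{k-1}/t_{k-1}$ at distance at most $1/(q_k(q_k - q_{k-1}))$. Applying Proposition \ref{range} to $I_{k-1}(b_1,\ldots,b_{k-1})$, this cylinder has $s_{k-1}/t_{k-1}$ as one endpoint and length $1/(t_{k-1}(t_{k-1} + t_{k-2}))$; using the identity $p_k q_{k-1} - p_{k-1} q_k = (-1)^{k+1}$ and its $b$-sequence analogue, a parity check shows that $x - p_k/q_k$ falls on the correct side. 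The remaining obligation is the length comparison
$$q_k(q_k - q_{k-1}) \geq t_{k-1}(t_{k-1} + t_{k-2}).$$

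The length inequality is where I expect the main difficulty to lie. Via the continuant recursion from Section \ref{sec:prelim}, its left-hand side is of order $c_k(c_k - 1)\, q_{k-1}^2$, while the right-hand side is comparable with $t_{k-1}^2$, so the content is that $c_k$ be large enough to dominate the ratio $t_{k-1}/q_{k-1}$. In the base case the analogous slack came from the explicit bound $b_1 \geq c_1(c_1-1) \geq c_1$ extracted directly from \eqref{def:c1} and \eqref{def:b1}. For the induction I would carry forward, alongside the correctness statements, a suitable interlacing inequality between the sequences $(c_i)$ and $(b_i)$ (whose precise form is supplied by the preceding step for $c_k$), and invoke Lemma \ref{lemma:quotient:tnqn} to upgrade these per-coordinate comparisons into the required inequality between continuants of different lengths. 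Finally, one checks $x - p_k/q_k > 0$ so that its continued fraction expansion is defined; the boundary case where this fails is already covered by Remark \ref{remark:rational:termination}.
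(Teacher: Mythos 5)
Your reduction to the cylinder-length inequality matches the paper exactly: both arguments rephrase the correctness of $b_k$ as the containment $x - p_k/q_k \in I_{k-1}(b_1,\ldots,b_{k-1})$, use the defining inequalities for $c_k$ to locate $x - p_k/q_k$ within $1/(q_k(q_k-q_{k-1}))$ of $s_{k-1}/t_{k-1}$ on the correct side (parity check), and thereby reduce the claim to $q_k(q_k-q_{k-1}) > t_{k-1}(t_{k-1}+t_{k-2})$. (You wrote $\geq$; strict inequality is what is actually needed since one endpoint of the target cylinder is open.)

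The gap is in how you propose to establish this length inequality. Your plan is a growth estimate: the left side is of order $c_k(c_k-1)q_{k-1}^2$, the right side of order $t_{k-1}^2$, and you would carry an interlacing inequality $b_i\geq c_i$ forward and invoke Lemma \ref{lemma:quotient:tnqn}. But Lemma \ref{lemma:quotient:tnqn} compares continuants of the \emph{same} length under a termwise dominance hypothesis, so it bounds $t_{k-1}/q_{k-1}$ from \emph{below}, not from above; to close your argument you would need an \emph{upper} bound on $t_{k-1}/q_{k-1}$ relative to $c_k$, and that is not something termwise dominance or growth estimates produce. The paper instead proves the length inequality by a two-sided reading of the algorithm's defining inequalities (Lemma \ref{lemma:quotient:qlarge}): the quantity $\left| x - s_{n-1}/t_{n-1} - p_{n-1}/q_{n-1} \right|$ is bounded below by $1/(q_{n-1}q_n)$ using the inequality that defines $c_n$, and bounded above by $1/(t_{n-1}(t_{n-1}+t_{n-2}))$ using the inequality that defines $b_{n-1}$; combining gives $q_n q_{n-1} > t_{n-1}(t_{n-1}+t_{n-2})$ outright, and then $q_k(q_k-q_{k-1}) > q_k q_{k-1}$ follows trivially from $c_k \geq 2$. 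Without this sandwiching observation, your sketch does not close, and you would essentially have to rediscover it to make the growth route work.
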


\begin{proof}
Case 1. $k=2n$ is an even number. 

Inequality which defines $b_{2n}$ is 
\begin{equation}\label{deff:b2n}
\frac{s_{2n}}{t_{2n}} \leq x - \frac{p_{2n}}{q_{2n}} < \frac{s_{2n}+s_{2n-1}}{t_{2n}+t_{2n-1}}.
\end{equation}

The condition \eqref{condition:cylinder:x-pq} is equivalent to the fact that 
$$
x-\frac{p_{2n}}{q_{2n}} \in I_{2n-1}(b_1,\ldots,b_{2n-1}),
$$
which in turn by Proposition \ref{range} is equivalent to the inequality
\begin{equation}\label{induction:b2n}
\frac{s_{2n-1}+s_{2n-2}}{t_{2n-1}+t_{2n-2}} < x - \frac{p_{2n}}{q_{2n}} \leq \frac{s_{2n-1}}{t_{2n-1}}.
\end{equation}
We want to show that a number $x-p_{2n}/q_{2n}$ will always satisfy the inequality \eqref{induction:b2n}.

By inductive hypothesis, we know that
$$
\frac{p_{2n}-p_{2n-1}}{q_{2n}-q_{2n-1}} \leq x - \frac{s_{2n-1}}{t_{2n-1}} < \frac{p_{2n}}{q_{2n}},
$$
which can be rewritten as 
$$
\frac{s_{2n-1}}{t_{2n-1}} - \frac{1}{q_{2n}(q_{2n}-q_{2n-1})} \leq x - \frac{p_{2n}}{q_{2n}} < \frac{s_{2n-1}}{t_{2n-1}}.
$$
Thus right-hand side of inequality \eqref{induction:b2n} is automatically true. Now we need to deal with the left-hand side.

It is enough to show that
$$
\frac{s_{2n-1}}{t_{2n-1}} - \frac{1}{q_{2n}(q_{2n}-q_{2n-1})} > \frac{s_{2n-1}+s_{2n-2}}{t_{2n-1}+t_{2n-2}}
$$
or, rewritten using properties of neighbor Farey fractions,
\begin{equation}
\frac{1}{t_{2n-1}(t_{2n-1}+t_{2n-2})} > \frac{1}{q_{2n}(q_{2n}-q_{2n-1})},
\end{equation}
which is just
\begin{equation}\label{induction:b2n:final}
q_{2n}(q_{2n}-q_{2n-1}) >t_{2n-1}(t_{2n-1}+t_{2n-2}).
\end{equation}

Case 2.  $k=2n+1$ is an odd number. 

Inequality which defines $b_{2n+1}$ is 
\begin{equation}\label{deff:b2n+1}
\frac{s_{2n+1}+s_{2n}}{t_{2n+1}+t_{2n}} < x - \frac{p_{2n+1}}{q_{2n+1}} \leq    \frac{s_{2n+1}}{t_{2n+1}}.
\end{equation}

The condition \eqref{condition:cylinder:x-pq} is equivalent to the fact that 
$$
x-\frac{p_{2n+1}}{q_{2n+1}} \in I_{2n}(b_1,\ldots,b_{2n}),
$$
which in turn by Proposition \ref{range} is equivalent to the inequality
\begin{equation}\label{induction:b2n+1}
\frac{s_{2n}}{t_{2n}}   \leq x - \frac{p_{2n+1}}{q_{2n+1}} < \frac{s_{2n}+s_{2n-1}}{t_{2n}+t_{2n-1}}.
\end{equation}
We want to show that a number $x-p_{2n+1}/q_{2n+1}$ will always satisfy the inequality \eqref{induction:b2n+1}.

By inductive hypothesis, we know that
$$
\frac{p_{2n+1}}{q_{2n+1}}  < x - \frac{s_{2n}}{t_{2n}} \leq \frac{p_{2n+1}-p_{2n}}{q_{2n+1}-q_{2n}},
$$
which can be rewritten as 
$$
\frac{s_{2n}}{t_{2n}}  < x - \frac{p_{2n+1}}{q_{2n+1}} \leq \frac{s_{2n}}{t_{2n}}+ \frac{1}{q_{2n+1}(q_{2n+1}-q_{2n})}.
$$
Thus left-hand side of inequality \eqref{induction:b2n+1} is automatically true. Now we need to deal with the right-hand side.

It is enough to show that
$$
\frac{s_{2n}}{t_{2n}}+ \frac{1}{q_{2n+1}(q_{2n+1}-q_{2n})} < \frac{s_{2n}+s_{2n-1}}{t_{2n}+t_{2n-1}},
$$
or, rewritten using properties of neighbor Farey fractions,
\begin{equation}
\frac{1}{t_{2n}(t_{2n}+t_{2n-1})} > \frac{1}{q_{2n+1}(q_{2n+1}-q_{2n})},
\end{equation}
which is just
\begin{equation}\label{induction:b2n+1:final}
q_{2n+1}(q_{2n+1}-q_{2n}) >t_{2n}(t_{2n}+t_{2n-1}).
\end{equation}

As one can see, both \eqref{induction:b2n:final} and \eqref{induction:b2n+1:final} can be written in terms of $k$ as
\begin{equation}\label{induction:bnbn:finalefinale}
q_k(q_k-q_{k-1}) > t_{k-1}(t_{k-1}+t_{k-2}).
\end{equation}

To show this we need the following lemma.
 
\begin{lemma}\label{lemma:quotient:qlarge}
Assume that partial quotients $c_1,b_1,\ldots,b_{k-1},c_k$ are correctly defined. Then for all $1\leq n \leq k$ we have 
\begin{equation}\label{inequality:lemma1}
q_{n}q_{n-1} > t_{n-1}(t_{n-1}+t_{n-2}) .
\end{equation}
\end{lemma}

\begin{proof}
We will show this for the odd index $2m+1\leq k$. For even indices, the process is exactly the same except for inequality signs due to the properties of even and odd convergents. By assumption, all partial quotients $c_i$ with indices smaller or equal to $2m+1$ and all partial quotients $b_i$ with indices smaller or equal to $2k$ are correctly defined.

Thus by our procedure, $c_{2m+1}$ satisfies
$$
\frac{p_{2m+1}}{q_{2m+1}}  < x - \frac{s_{2m}}{t_{2m}}   \leq \frac{p_{2m+1}-p_{2m}}{q_{2m+1}-q_{2m}}.
$$
This implies
\begin{equation}\label{eq:good:first}
\frac{1}{q_{2m}q_{2m+1}}  < \left| x - \frac{s_{2m}}{t_{2m}} - \frac{p_{2m}}{q_{2m}} \right| \leq \frac{1}{q_{2m}(q_{2m+1}-q_{2m})}.
\end{equation}
On the other hand, by procedure we also have
$$
\frac{s_{2m}}{t_{2m}} \leq x- \frac{p_{2m}}{q_{2m}} < \frac{s_{2m}+s_{2m-1}}{t_{2m}+t_{2m-1}}.
$$
Thus
\begin{equation}\label{eq:good:second}
0 \leq \left| x - \frac{s_{2m}}{t_{2m}} - \frac{p_{2m}}{q_{2m}} \right| < \frac{1}{t_{2m}(t_{2m}+t_{2m-1})}.
\end{equation}
Combining left-hand side of \eqref{eq:good:first} with right-hand side of \eqref{eq:good:second} we get
$$
q_{2m}q_{2m+1} > t_{2m}(t_{2m}+t_{2m-1}).
$$
\end{proof}

\noindent
\textit{End of proof of Lemma \ref{bk_correctly_defined}. }
By Lemma \ref{lemma:quotient:qlarge} for $n=k$, we get
$$
q_{k}q_{k-1} > t_{k-1} (t_{k-1}+t_{k-2})
$$
Thus in order to secure \eqref{induction:bnbn:finalefinale} we need to check that
$$
q_{k}q_{k-1} < q_k(q_k-q_{k-1})
$$
The latter is equivalent to 
$$
2 q_{k-1} < q_{k}=c_{k} q_{k-1} + q_{k-2},
$$
which is always true as $c_i\geq2$ for all $i\in\N$ just by definition of procedure \eqref{algorithm:cn}.

Thus $b_k$ can always be correctly defined assuming that all previous partial quotients are correctly defined. 
\end{proof}

\begin{remark}\label{remark:termination:exp1}
In Remark \ref{remark:rational:termination} we described a situation when the algorithm terminates in finite numbers of steps. This corresponds to the case when $x-p_{2n}/q_{2n}=s_{2n}/t_{2n}$ in \eqref{deff:b2n} or $x-p_{2n+1}/q_{2n+1}=s_{2n+1}/t_{2n+1}$ in \eqref{deff:b2n+1}, both of which are legitimate solutions to the inequalities. If this happens, then we get a finite decomposition, and we do not need to proceed further.
\end{remark}

Now we need to show the induction step for partial quotients of the limit number $c$ assuming that the situation described in Remark \ref{remark:termination:exp1} did not occur.

\begin{lemma}\label{ck+1_correctly_defined}
    Assume that $c_1,b_1,\ldots,c_k,b_k$ are correctly defined and that the algorithm did not terminate yet. Then $c_{k+1}$ is also correctly defined.
\end{lemma}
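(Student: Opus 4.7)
The plan is to mirror the proof of Lemma \ref{bk_correctly_defined}, now with the roles of $p_n/q_n$ and $s_n/t_n$ swapped; the ``$+1$'' appearing in the definition \eqref{algorithm:cn} should play the asymmetric role filled by $c_k\geq 2$ in the earlier lemma. I would split into the two parity cases $k=2n$ and $k=2n+1$. In each, the target inclusion $x-s_k/t_k\in I_k(c_1,\ldots,c_k)$ unfolds via Proposition \ref{range} into a two-sided inequality with $p_k/q_k$ on one side and the Farey mediant $(p_k+p_{k-1})/(q_k+q_{k-1})$ on the other. The bound involving $p_k/q_k$ is an immediate rearrangement of the defining inequality \eqref{deff:b2n} or \eqref{deff:b2n+1} for $b_k$, so only the mediant side requires work.

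Applying the Farey-mediant identities
$$\frac{p_k+p_{k-1}}{q_k+q_{k-1}} - \frac{p_k}{q_k} = \frac{(-1)^k}{q_k(q_k+q_{k-1})}, \qquad \frac{s_k+s_{k-1}}{t_k+t_{k-1}} - \frac{s_k}{t_k} = \frac{(-1)^k}{t_k(t_k+t_{k-1})}$$
reduces the mediant-side bound, in both parity cases, to the single continuant comparison
$$t_k(t_k+t_{k-1}) \;\geq\; q_k(q_k+q_{k-1}). \qquad (\star)$$
To establish $(\star)$, I would first derive a companion to Lemma \ref{lemma:quotient:qlarge} by repeating that argument with the $b$'s and $c$'s interchanged: combining the two defining inequalities \eqref{algorithm:bn} and \eqref{algorithm:cn} and tracking the resulting estimates gives the dual bound
$$q_k(q_k - q_{k-1}) \;<\; t_{k-1}(t_k + t_{k-1}).$$
This companion, combined with Lemma \ref{lemma:quotient:qlarge} itself applied at $n=k$, supplies the needed control on both $q_k q_{k-1}$ and $q_k(q_k - q_{k-1})$ in terms of the $t$-continuants.

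The main obstacle is then closing the gap from these two inequalities to $(\star)$: in Lemma \ref{bk_correctly_defined} the slack came cleanly from $c_k\geq 2$, whereas here only $b_k\geq 1$ is automatic, so additional input is needed. I would handle this by an auxiliary simultaneous induction on $n$ establishing that both $b_n \geq c_n$ and $t_n \geq q_n$ for every $n$ reached by the algorithm. The base case $n=1$ is the computation $b_1 \geq c_1(c_1-1) \geq c_1$ appearing just before Lemma \ref{bk_correctly_defined}. For the inductive step I would invoke Lemma \ref{lemma:quotient:tnqn} together with the two displayed estimates above to force $b_n \geq c_n$; then the recursion $t_n = b_n t_{n-1} + t_{n-2}$ with the inductive hypothesis promotes this to $t_n \geq q_n$. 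Once $t_n \geq q_n$ is in hand for all $n\leq k$, the inequality $(\star)$ is immediate by monotonicity of $y\mapsto y(y+t_{k-1})$ together with $t_{k-1}\geq q_{k-1}$, and the proof closes in exact parallel with the final paragraph of Lemma \ref{bk_correctly_defined}.
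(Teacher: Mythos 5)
Your overall structure aligns closely with the paper's: the reduction via the Farey-mediant identities to the continuant comparison $t_k(t_k+t_{k-1}) \geq q_k(q_k+q_{k-1})$, the role-swapped companion to Lemma \ref{lemma:quotient:qlarge} (which the paper formalizes as Lemma \ref{lemma:quotient:tlarge}, and which you state correctly), and the recognition that a simultaneous induction establishing $b_n \geq c_n$ (hence $t_n \geq q_n$) for $n \leq k$ is what closes the gap. Your observation about the asymmetry with Lemma \ref{bk_correctly_defined} — that $c_k\geq 2$ supplied slack there while no analogous lower bound on $b_k$ is immediately available here — is exactly the right diagnosis, and the paper too resolves it by proving $b_n\geq c_n$ (its Lemma \ref{lemma:growth:bn}).

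Where your plan does not close as stated is the mechanism for the inductive step: \emph{``invoke Lemma \ref{lemma:quotient:tnqn} together with the two displayed estimates to force $b_n\geq c_n$.''} Lemma \ref{lemma:quotient:tnqn} bounds ratios of continuants and already takes $b_j\geq c_j$ as its hypothesis; neither it nor the two continuant inequalities control the individual partial quotient $b_n$. The paper's argument for $b_n\geq c_n$ is of a different character. From the two nested cylinder inequalities
\[
\frac{s_{2m}}{t_{2m}} \leq x - \frac{p_{2m}}{q_{2m}} < \frac{s_{2m}+s_{2m-1}}{t_{2m}+t_{2m-1}}, \qquad \frac{s_{2m+1}+s_{2m}}{t_{2m+1}+t_{2m}} < x - \frac{p_{2m+1}}{q_{2m+1}} \leq \frac{s_{2m+1}}{t_{2m+1}},
\]
together with $\frac{p_{2m}}{q_{2m}}<\frac{p_{2m+1}}{q_{2m+1}}$, one squeezes out
\[
\frac{1}{q_{2m}q_{2m+1}} \;\leq\; \frac{b_{2m+1}}{(t_{2m+1}+t_{2m})(t_{2m}+t_{2m-1})},
\]
and then substituting the companion bound $t_{2m+1}+t_{2m} > q_{2m+1}(q_{2m+1}-q_{2m})/t_{2m}$ yields $b_{2m+1} > c_{2m+1}-1$, hence $b_{2m+1}\geq c_{2m+1}$ by integrality. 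That squeeze is the missing ingredient; your two displayed continuant estimates plus Lemma \ref{lemma:quotient:tnqn} cannot substitute for it, since they never introduce $b_n$ itself into the picture.
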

\begin{proof}
Once again, we will consider only the case of odd index $k+1=2n+1$.

Inequality which defines $c_{2n+1}$ is 
$$
\frac{p_{2n+1}}{q_{2n+1}} < x - \frac{s_{2n}}{t_{2n}} \leq \frac{p_{2n+1}-p_{2n}}{q_{2n+1}-q_{2n}}.
$$

The condition \eqref{condition:cylinder:x-st} is equivalent to the fact that 
$$
x-\frac{s_{2n}}{t_{2n}} \in I_{2n}(c_1,\ldots,c_{2n}),
$$
which in turn by Proposition \ref{range} is equivalent to the inequality
\begin{equation}\label{induction:c2n+1}
\frac{p_{2n}}{q_{2n}} \leq x - \frac{s_{2n}}{t_{2n}} <   \frac{p_{2n}+p_{2n-1}}{q_{2n}+q_{2n-1}}.
\end{equation}
We want to show that a number $x-s_{2n}/t_{2n}$ will always satisfy the inequality \eqref{induction:c2n+1}.

By the inductive hypothesis, we know 
$$
\frac{s_{2n}}{t_{2n}} \leq x - \frac{p_{2n}}{q_{2n}} < \frac{s_{2n}+s_{2n-1}}{t_{2n}+t_{2n-1}},
$$
which can be rewritten as 
$$
\frac{p_{2n}}{q_{2n}} \leq x - \frac{s_{2n}}{t_{2n}} <\frac{p_{2n}}{q_{2n}} + \frac{1}{t_{2n}(t_{2n}+t_{2n-1})}.
$$
Thus left-hand side of inequality \eqref{induction:c2n+1} is automatically true. Now we need to deal with the right-hand side.

It is enough to show that
$$
\frac{p_{2n}}{q_{2n}} + \frac{1}{t_{2n}(t_{2n}+t_{2n-1})} < \frac{p_{2n}+p_{2n-1}}{q_{2n}+q_{2n-1}},
$$
or, rewritten using properties of neighbor Farey fractions,
\begin{equation}
\frac{1}{t_{2n}(t_{2n}+t_{2n-1})} < \frac{1}{q_{2n}(q_{2n}+q_{2n-1})},
\end{equation}
which is just
\begin{equation}\label{induction:c2n+1:final}
t_{2n}(t_{2n}+t_{2n-1}) >q_{2n}(q_{2n}+q_{2n-1}).
\end{equation}

Easy to see that proving $b_n\geq c_n$ for any $n\in\N$ would be sufficient to show that \eqref{induction:c2n+1:final} is true. In order to show that $b_n\geq c_n$, we first formulate the following statement.

\begin{lemma}\label{lemma:quotient:tlarge}
Assume that partial quotients $c_1,b_1,\ldots,b_{k-1},c_k,b_k$ are correctly defined. Then for all $1\leq n \leq k$ we have 
\begin{equation}\label{inequality:lemma2}
t_{n-1}(t_n+t_{n-1}) > q_n(q_n - q_{n-1}).
\end{equation}
\end{lemma}
\begin{proof}
We will show this for the odd index $2m+1\leq n$. For even indices, the process is exactly the same except for inequality signs due to the properties of even and odd convergents. By assumption, all partial quotients $c_i$ with indices smaller or equal to $2m+1$ and all partial quotients $b_i$ with indices smaller or equal to $2m+1$ are correctly defined.

By our procedure, $b_{2m+1}$ satisfies
$$
\frac{s_{2m+1}+s_{2m}}{t_{2m+1}+t_{2m}}  < x - \frac{p_{2m+1}}{q_{2m+1}}   \leq \frac{s_{2m+1}}{t_{2m+1}}.
$$
This implies
\begin{equation}\label{eq:bad:first}
\frac{1}{t_{2m}(t_{2m+1}+t_{2m})}  < \left| x - \frac{s_{2m}}{t_{2m}} - \frac{p_{2m+1}}{q_{2m+1}} \right| \leq \frac{1}{t_{2m+1}t_{2m}}.
\end{equation}
On the other hand, by the procedure $c_{2m+1}$ was defined from 
$$
\frac{p_{2m+1}}{q_{2m+1}} < x- \frac{s_{2m}}{t_{2m}} \leq \frac{p_{2m+1}-p_{2m}}{q_{2m+1}-q_{2m}}.
$$
Thus
\begin{equation}\label{eq:bad:second}
0 < \left| x - \frac{s_{2m}}{t_{2m}} - \frac{p_{2m+1}}{q_{2m+1}} \right| \leq \frac{1}{q_{2m+1}(q_{2m+1}-q_{2m})}.
\end{equation}
Combining left-hand side of \eqref{eq:bad:first} with the right-hand side of \eqref{eq:bad:second} we get
$$
 t_{2m}(t_{2m+1}+t_{2m}) >q_{2m+1}(q_{2m+1}-q_{2m}) .
$$

\end{proof}



 Now our goal is to show that the decomposition of a number $x$ as a sum $x=c+b$, defined by algorithm \eqref{algorithm:bn} and \eqref{algorithm:cn} will generate only large partial quotients.


\begin{lemma}\label{lemma:growth:bn}
Assume that partial quotients $c_1,b_1,\ldots,b_{k},c_{k+1}$ are correctly defined. Then 
$$
b_{k+1}\geq c_{k+1}.
$$  
\end{lemma}
\begin{proof}
We will prove the statement when $k$ is even. When it is odd, all inequalities are similar and yield the same result.

When $k=2m$ is even, and as $b_{2m}$ is defined and $b_{2m+1}$ is always correctly defined by Lemma \ref{bk_correctly_defined},  we get
$$
\frac{s_{2m}}{t_{2m}} \leq  x - \frac{p_{2m}}{q_{2m}} < \frac{s_{2m}+s_{2m-1}}{t_{2m}+t_{2m-1}}
$$
and 
$$
\frac{s_{2m+1}+s_{2m}}{t_{2m+1}+t_{2m}}  < x - \frac{p_{2m+1}}{q_{2m+1}}   \leq \frac{s_{2m+1}}{t_{2m+1}}.
$$
As $\frac{p_{2m}}{q_{2m}}<\frac{p_{2m+1}}{q_{2m+1}} $, we conclude inequalities
$$
\frac{s_{2m+1}+s_{2m}}{t_{2m+1}+t_{2m}}\leq x - \frac{p_{2m+1}}{q_{2m+1}} < x - \frac{p_{2m}}{q_{2m}} \leq \frac{s_{2m}+s_{2m-1}}{t_{2m}+t_{2m-1}}.
$$
Thus
$$
\frac{1}{q_{2m}q_{2m+1}} \leq \frac{b_{2m+1}}{(t_{2m+1}+t_{2m})(t_{2m}+t_{2m-1})}.
$$
Using 
$$
t_{2m+1}+t_{2m} > \frac{q_{2m+1}(q_{2m+1}-q_{2m})}{t_{2m}}
$$
from Lemma \ref{lemma:quotient:tlarge}, we get
$$
b_{2m+1} > \frac{q_{2m+1}-q_{2m}}{q_{2m}}\cdot \frac{t_{2m}+t_{2m-1}}{t_{2m}} = \left( \underbrace{c_{2m+1}-1}_{\geq1}+\underbrace{\frac{q_{2m-1}}{q_{2m}}}_{>0} \right)\cdot \left( \underbrace{1+\frac{t_{2m-1}}{t_{2m}}}_{>1} \right) > c_{2m+1}-1.
$$
As $c_i,b_i$ are integers for any $i\in\N$, we conclude $b_{2m+1} \geq c_{2m+1}.$
\end{proof}

\textit{End of proof of Lemma \ref{ck+1_correctly_defined}. }
As we proved that $b_n\geq c_n$, we conclude that inequality \eqref{induction:c2n+1:final} is true, thus $c_{k+1}$ is always correctly defined assuming that all previous partial quotients are correctly defined. \end{proof}

    

Note that Lemma \ref{lemma:growth:bn} immediately implies the main result of \cite{MR0269603}.

\begin{corollary}\label{coroll:cusick}
     $S(2)+S(2)=[0,1]$.
     \end{corollary}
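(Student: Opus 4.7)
The plan is to derive Corollary \ref{coroll:cusick} as an essentially one-line consequence of the algorithmic construction developed in Section \ref{sec:main:construction}. The inclusion $S(2)+S(2) \subseteq [0,1]$ is immediate, since any $x \in S(2)$ satisfies $a_1(x) \geq 2$ and hence $x \leq 1/2$, so the content of the statement lies entirely in the reverse inclusion $[0,1] \subseteq S(2)+S(2)$.

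To prove this reverse inclusion, I would fix $x \in [0,1]$. The case $x=0$ is trivial. For $x \in (0,1]$ I would apply the algorithm \eqref{algorithm:bn}--\eqref{algorithm:cn} to produce continued fractions $c = [c_1, c_2, \ldots]$ and $b = [b_1, b_2, \ldots]$. The defining formula $c_1 = a_1(x) + 1$ forces $c_1 \geq 2$ since $a_1(x) \geq 1$, and the recursion $c_{n+1} = a_{n+1}(x - s_n/t_n) + 1$ forces $c_{n+1} \geq 2$ directly, so $c \in S(2)$. Lemma \ref{lemma:growth:bn} then gives $b_n \geq c_n \geq 2$ for every $n$, whence $b \in S(2)$ as well.

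It remains to verify $c + b = x$. This follows from the intermediate estimates already derived in the proofs of Lemmas \ref{lemma:quotient:qlarge} and \ref{lemma:quotient:tlarge}: the inequalities \eqref{eq:good:first}--\eqref{eq:good:second} show that at each stage $|x - p_n/q_n - s_n/t_n|$ is bounded by $1/(q_n q_{n-1})$ (or an analogous quantity), and since every $c_i \geq 2$ the continuants $q_n$ grow at least as fast as the Fibonacci numbers, so the error tends to zero. If the algorithm terminates at a finite stage as in Remark \ref{remark:rational:termination}, then $x = p_n/q_n + s_n/t_n$ holds exactly and both summands still lie in $S(2)$ by the same reasoning.

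The main obstacle, such as it is, is really bookkeeping: all the substantive work has already been done in establishing correctness (Lemmas \ref{bk_correctly_defined}, \ref{ck+1_correctly_defined}), the key monotonicity $b_n \geq c_n$ (Lemma \ref{lemma:growth:bn}), and the continuant growth estimates. One just has to observe that the smallest admissible value of $k$ for which the algorithm delivers $c, b \in S(k)$ is exactly $k=2$, because the $+1$ in the definition of $c_1$ and $c_{n+1}$ produces no slack beyond guaranteeing $c_n \geq 2$; pushing to $k \geq 3$ is precisely the content of Theorem \ref{theorem:main} and requires the more refined analysis appearing in Section \ref{sec:proofs}.
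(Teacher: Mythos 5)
Your proposal is correct and follows essentially the same route as the paper: it observes that the recursion \eqref{algorithm:cn} forces $c_n \geq 2$ automatically (since each $c_{n+1}$ is a genuine partial quotient plus $1$), invokes Lemma~\ref{lemma:growth:bn} to get $b_n \geq c_n \geq 2$, and appeals to the already-established correctness and convergence of the algorithm. You spell out the convergence via \eqref{eq:good:second} and continuant growth a bit more explicitly than the paper's two-line proof, but the argument is identical in substance.
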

     \begin{proof}
     Indeed, by procedure \eqref{algorithm:cn} we trivially have $c_n\geq2$ for all $n\in\N$ and from Lemma \ref{lemma:growth:bn} we get $b_n\geq c_n\geq2$ for all $n\in\N$. As the algorithm is defined for any real $x\in(0,1]$, we get the result.
     \end{proof}





Now we can show that the sequence $\{c_n\}_{n\geq1}$ is bounded from below.
\begin{lemma}\label{lemma:growth:cn}
Let $c_1\geq3$. Then for the algorithm defined in \eqref{algorithm:bn}, \eqref{algorithm:cn} we have
\begin{equation}\label{ineq:cn:growing}
    c_{n+1} \geq (c_1 -1)^2 \text{ for any } n\geq1.
\end{equation}
\end{lemma}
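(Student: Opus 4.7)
My plan is to combine the reindexed form of Lemma~\ref{lemma:quotient:qlarge},
\[c_{n+1} > \frac{t_n(t_n+t_{n-1}) - q_n q_{n-1}}{q_n^2},\]
obtained by substituting $q_{n+1}=c_{n+1}q_n+q_{n-1}$ into $q_{n+1}q_n>t_n(t_n+t_{n-1})$, with the strong initial bound $b_1\ge c_1(c_1-1)$ and with Lemma~\ref{lemma:quotient:tnqn}.

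For the base case $n=1$, the hypothesis $c_1\ge 3$ together with $c_1=a_1(x)+1$ forces $x-1/c_1\le 1/(c_1(c_1-1))$, hence $b_1=a_1(x-1/c_1)\ge c_1(c_1-1)$. Substituting into the displayed bound yields
\[c_2 > \frac{b_1(b_1+1)}{c_1^2} - \frac{1}{c_1} \ge c_1^2 - 2c_1 + 2 - \frac{2}{c_1} > (c_1-1)^2,\]
so integrality of $c_2$ gives $c_2\ge (c_1-1)^2$ (with room to spare).

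For $n\ge 2$, I will apply Lemma~\ref{lemma:quotient:tnqn} with $k=1$, feeding in the bounds $b_1-c_1\ge c_1(c_1-2)$ and $c_2\ge (c_1-1)^2$ from the previous step. A short calculation gives
\[\frac{t_n}{q_n} \ge (c_1-1) - \frac{c_1-2}{c_1(c_1-1)^2+1} =: \alpha.\]
Squaring and dropping a positive leftover term, the bound $\alpha^2\ge (c_1-1)^2-1$ reduces to the elementary inequality $2(c_1-1)(c_1-2)\le c_1(c_1-1)^2+1$, which holds with margin for all $c_1\ge 3$. Since $t_{n-1}\ge q_{n-1}$ by Lemma~\ref{lemma:growth:bn}, the displayed bound rearranges as
\[c_{n+1} > \alpha^2 + \frac{q_{n-1}}{q_n}(\alpha-1) > (c_1-1)^2 - 1,\]
and integrality of $c_{n+1}$ closes the argument.

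The main obstacle is just the polynomial estimate $\alpha^2\ge (c_1-1)^2-1$; this is the one place where the hypothesis $c_1\ge 3$ really enters, and after that the rest is routine algebra built on the already-established lemmas.
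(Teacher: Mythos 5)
Your proof is correct and follows essentially the same route as the paper's: establish the base bounds $b_1\ge c_1(c_1-1)$ and hence $c_2\ge(c_1-1)^2$ (the paper actually records the slightly sharper $c_2\ge(c_1-1)^2+1$, but you verify that the weaker version still suffices), then apply Lemma~\ref{lemma:quotient:tnqn} with $k=1$ to control $t_n/q_n$ for all $n\ge 2$, and close with integrality. The only substantive deviations are cosmetic: you get $b_1\ge c_1(c_1-1)$ directly from $x-1/c_1\le 1/(c_1(c_1-1))$ rather than by citing Lemma~\ref{lemma:quotient:tlarge}, and you retain the cross-term $t_nt_{n-1}/q_n^2-q_{n-1}/q_n$ and bound it via $t_{n-1}\ge q_{n-1}$, where the paper simply drops it as nonnegative; your explicit reduction to $2(c_1-1)(c_1-2)\le c_1(c_1-1)^2+1$ is a nice way to make the ``easy to check'' step concrete. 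One small correction to your commentary: that cubic-vs-quadratic inequality in fact holds for every $c_1\ge 1$ (since $c_1(c_1-1)^2+1-2(c_1-1)(c_1-2)=(c_1-1)(c_1^2-3c_1+4)+1>0$ always), so it is not ``the one place where $c_1\ge 3$ really enters''; the hypothesis is actually used in the base case, where $1-2/c_1>0$ is needed to get $c_2>(c_1-1)^2$ strictly.
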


\begin{proof}
First, note that the inequality \eqref{inequality:lemma1} from Lemma \ref{lemma:quotient:qlarge} implies
\begin{equation}\label{ineq:ck:bound}
    c_{k+1} > \left(\frac{t_k}{q_k}\right)^2 + \frac{t_k}{q_k} \cdot \frac{t_{k-1}}{q_{k}}- \frac{q_{k-1}}{q_{k}}.
\end{equation}

Applying Lemma \ref{lemma:quotient:tlarge} with $k=1$, we get
$$
b_1 +1 >c_1 (c_1-1).
$$
As $c_1(c_1-1)$ and $b_1$ are integer numbers, we get $b_1 \geq c_1 (c_1-1). $
In particular, this means that 
\begin{equation}\label{ineq:b-c}
    b_1 -c_1 \geq c_1 (c_1-2).
\end{equation}

Now we let $k=1$ in the inequality \eqref{ineq:ck:bound} and use the bound $b_1 \geq c_1 (c_1-1) $ to get
$$
c_2 > (c_1-1)^2 +1 - \frac{2}{c_1} \geq (c_1-1)^2 +1 - \frac{2}{3}.
$$
As $c_2$ is integer, we get $c_2 \geq (c_1-1)^2 +1$ and so the inequality $c_2\geq(c_1-1)^2$ is trivially satisfied for any $c_1\geq3$. Next, as $b_n\geq c_n$ for all positive integer $n$, we have $t_n\geq q_n$, and so
$$
\frac{t_k}{q_k} \cdot \frac{t_{k-1}}{q_{k}}- \frac{q_{k-1}}{q_{k}} \geq 0.
$$
Thus,
$$
 c_{k+1} > \left(\frac{t_k}{q_k}\right)^2.
$$

Applying Lemma \ref{lemma:quotient:tnqn} to the sequences $\{c_n\}_{n\geq1}$ and $\{b_n\}_{n\geq1}$ defined by the algorithm \eqref{algorithm:bn}, \eqref{algorithm:cn}, for $k\geq2$ we get
\begin{equation}\label{ineq:ck+1:continuants}
c_{k+1} > \left(1 + \frac{b_1-c_1 }{c_1 +  1/c_2} \right)^2.
\end{equation}

Substituting the bound for $b_1-c_1$ into \eqref{ineq:ck+1:continuants} and using the fact that $c_2\geq (c_1-1)^2 +1$, we get
$$
c_{k+1} >  \left(1 + \frac{c_1 (c_1-2)}{c_1 +  \frac{1}{(c_1-1)^2 +1}} \right)^2.
$$
Easy to check that we have
$$
\left(1 + \frac{c_1 (c_1-2)}{c_1 +  \frac{1}{(c_1-1)^2 +1}} \right)^2 > (c_1-1)^2-1
$$
for any $c_1\geq3$. As $c_{k+1}$ is an integer we get the desired conclusion.
\end{proof}

\begin{remark}
The condition $c_1 \geq3$ cannot be removed without making the bound \eqref{ineq:cn:growing} worse. Indeed, one can easily check that for any number $x\in\left(\frac56,\frac9{10}\right)$ one has $c_1(x)=b_1(x)=c_2(x)=2$. The case $c_1=2$ corresponds to the sum $S(2)+S(2)$, which was already covered by Corollary \ref{coroll:cusick}, so it is not a restrictive condition.
\end{remark}

\begin{remark}
   The bound \eqref{ineq:cn:growing} is not optimal, however, it is enough for our needs to get the desired statements.
\end{remark}

\begin{remark}
   Note that similarly to the inequality \eqref{ineq:ck:bound}, we can use the inequality \eqref{inequality:lemma2} from Lemma \ref{lemma:quotient:tlarge} to get a bound on $b_k$ of the form
\begin{equation}\label{ineq:bk:bound}
    b_{k}> \left( \frac{q_{k}}{t_{k-1}} \right)^2 - \frac{q_{k-1}}{t_{k-1}} \cdot \frac{q_{k}}{t_{k-1}} - 1 -\frac{t_{k-2}}{t_{k-1}}.
\end{equation}
\end{remark}

\section{Proof of the main results}\label{sec:proofs}

In this section, we extract the main results from the algorithm and statements above. We start by disproving the conjecture of Cusick.
\begin{proof}[Proof of Theorem \ref{theorem:main}]
For any $x\in\left(0, \frac{1}{k-1} \right]$ we clearly have $c_1(x)\geq k$. By Lemmas \ref{lemma:growth:bn} and \ref{lemma:growth:cn} we get that $b_n(x)\geq c_n(x) \geq k$ for any $n\in\N$. Lemmas \ref{bk_correctly_defined} and \ref{ck+1_correctly_defined} imply that the algorithm is defined correctly and inequality \eqref{eq:good:second} shows that the equation \eqref{eq:convergence} is true, i.e. that the algorithm converges to $x$.
\end{proof}

Similarly, we can show the result for the sum of two different Cantor sets, which is Theorem \ref{main:theorem2}.

\begin{proof}[Proof of Theorem \ref{main:theorem2}]

Fix integer $m,n$ , such that 
$3 \leq m < n\leq (m-1)^2$. We want to build $c\in S(m)$ and $b\in S(n)$. For any number $x\in (0,\frac{1}{m-1}]$ by algorithm we get $c_1\geq m$. Then, as in the proof of Lemma \ref{lemma:growth:cn} we get 
$$b_1 \geq c_1(c_1-1) \geq m(m-1)\geq n.$$

As $c_1\geq m \geq3$, the condition of Lemma \ref{lemma:growth:cn} is satisfied and we get $c_{n+1}\geq (c_1-1)^2 \geq (m-1)^2$ for any $n\geq1$. Also, by Lemma \ref{lemma:growth:bn} we get $b_{k+1}\geq c_{k+1},$ so we also have $b_{k+1}\geq (m-1)^2\geq n$. The fact that the algorithm is already known by the proof of Theorem \ref{theorem:main}.
\end{proof}

\begin{proof}[Proof of Theorem \ref{thm:square}]
   Assume $m\geq3$. We want to take any number $x\in(0,\frac{m+1}{m^2}]$ and decompose it as $x=c+b$, where $c\in S(m)$ and $b\in S(m^2)$.

    First, note that $\frac{m+1}{m^2} < \frac{1}{m-1}$, and thus $c_1(x)\geq m$ for any $x\in(0,\frac{m+1}{m^2}]$. As before, $b_1:=b_1(x)$ satisfies $b_1\geq c_1(c_1-1)$. Now we consider two options.

    \begin{enumerate}
        \item One has $c_1\geq m+1$. Then $b_1\geq c_1(c_1-1)\geq m(m+1)> m^2$.
        Also, by Lemma \ref{lemma:growth:cn} we get $c_{n+1}\geq(c_1-1)^2$ for any $n\geq1$. As $c_1\geq m+1$ and $b_{k+1}\geq c_{k+1}$ by Lemma \ref{lemma:growth:bn}, we get
        
$$
b_{k+1}\geq c_{k+1} \geq (c_1-1)^2 \geq m^2.
$$    
So we get the desired bound.

        \item One has $c_1=m$. Then $x-\frac{1}{c_1} \in \left(0,\frac{1}{m^2}\right)$, and thus $b_1\geq m^2$. Using \eqref{ineq:ck:bound} with $k=1$, we get
        $$
c_{2} > m^2 + m \cdot \frac{1}{m} - \frac{1}{m}=m^2+1-\frac{1}{m}.
$$
As $c_2$ is integer, we get $c_2\geq m^2+1$, and so also $b_2\geq m^2+1$. Now for $k\geq2$ by \eqref{ineq:ck+1:continuants} we have
$$
c_{k+1} > \left(1 + \frac{m^2-m }{m +  \frac{1}{m^2+1}} \right)^2 > m^2-1.
$$
As $c_{k+1}$ are integers, and $b_{k+1}\geq c_{k+1}$, we get $b_{k+1}\geq m^2$, which is the desired bound.
    \end{enumerate}

    The remaining case is $m=2$, that is we want show that $S(2)+S(4)=[0,3/4]$. Note that by Theorem \ref{main:theorem2} for $m=3,n=4$ we get
    $$
    [0,1/2]\subseteq S(3)+S(4)\subseteq S(2)+S(4), $$
    so we only need to prove the result for the interval $(1/2,3/4)$. On this interval we always have $c_1=2$, and $b_1\geq 4$. Therefore, by \eqref{ineq:ck:bound} for $k=1$ we get $c_2>4+1/2$, so $c_2\geq 5$, and thus $b_2\geq5$, too. Then by \eqref{ineq:ck+1:continuants} for $k\geq2$ we get
$$
c_{k+1} > \left(1+\frac{4}{2+1/5} \right)^2 = \frac{441}{121}>3.
$$
So $c_{k+1}\geq4$, and therefore $b_{k+1}\geq4$ for any $k\geq2$ and the proof is finished.
\end{proof}





\section{Gaps in $S(k)+S(k)$}\label{sec:gaps}

Before proving Theorem \ref{prop:gaps}, let us provide a couple of results to better understand the arrangement of the intervals $G_{k,n}$. 

First, let us introduce some notation. Let
$$
G_{k}^{odd}:=\bigcup_{n=1}^\infty G_{k,2n-1}  \quad\text{ and } \quad G_{k}^{even} := \bigcup_{m=1}^\infty G_{k,2m}.
$$

In the proofs, we will also make use of the denominator of $M_{k,n}$, which we denote by $d_{k,n}$ (or just $d_n$ when $k$ is fixed and there's no confusion), so that
$$
d_n=d_{k,n} := \langle \underbrace{k,\ldots,k}_{n \text{ times }} \rangle. 
$$
Using this notation, we see that the denominator of $m_{k,n}$ is equal to $d_n+d_{n-1}$.

Finally, by $S_k$ we denote a metallic mean of a natural number $k$, that is the unique positive real root of 
$$x^2-kx-1=0.$$

\begin{proposition}\label{prop:metall}
For any fixed $k\geq3$, for any $x\in G_{k}^{odd}$ and any $y\in G_{k}^{even}$ we have 
$$
x < 2S_k^{-1} < y.
$$
\end{proposition}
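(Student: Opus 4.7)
The plan is to reduce everything to the well-known fact that the convergents of a periodic continued fraction alternate around the limit, together with the explicit identification of that limit with $1/S_k$.

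First I would note that $S_k^{-1}$ admits the infinite regular continued fraction expansion $[k,k,k,\dots]$: if $\alpha := [k,k,k,\dots]$, then $\alpha = 1/(k+\alpha)$, so $\alpha^2 + k\alpha - 1 = 0$, which means $\alpha$ is the positive root of $x^2+kx-1=0$. Setting $y = 1/\alpha$, one checks $y^2 - ky - 1 = 0$, so $1/\alpha = S_k$, i.e. $\alpha = S_k^{-1}$. Consequently $M_{k,n}$ is exactly the $n$-th convergent of the irrational number $S_k^{-1}$. Standard continued fraction theory (even/odd convergents) then gives
\begin{equation}
M_{k,2m} < S_k^{-1} < M_{k,2m-1}, \qquad m \ge 1.
\end{equation}

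With this in hand, the proposition reduces to looking at the correct endpoint of each $G_{k,n}$. If $x \in G_{k,n}$ with $n = 2m-1$ odd, then by definition $x < 2M_{k,n+1} = 2M_{k,2m}$. Since $2m$ is even, $M_{k,2m} < S_k^{-1}$, and therefore $x < 2S_k^{-1}$. This handles every component of $G_k^{\mathrm{odd}}$. Symmetrically, if $y \in G_{k,n}$ with $n = 2m$ even, then $y > 2M_{k,n+1} = 2M_{k,2m+1}$, and since $2m+1$ is odd, $M_{k,2m+1} > S_k^{-1}$, giving $y > 2S_k^{-1}$.

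I do not see any real obstacle: the only subtlety is checking that $S_k^{-1}$ really is the continued-fraction limit of the $M_{k,n}$, and that one is purely algebraic. The intervals $G_{k,n}$ are defined so that the endpoint relevant for the bound against $2S_k^{-1}$ is precisely $2M_{k,n+1}$, and convergent alternation does the rest. No use of $m_{k,n}$ or of the other endpoint $M_{k,n}+m_{k,n}$ is needed for this proposition; those will matter later when studying the geometry of the gaps themselves.
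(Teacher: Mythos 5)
Your proof is correct and follows essentially the same route as the paper: identify $S_k^{-1}=[k,k,k,\dots]$, observe that the $M_{k,n}$ are its convergents, and then use the alternation of even and odd convergents applied to the endpoint $2M_{k,n+1}$ of each gap $G_{k,n}$. The paper phrases the reduction slightly differently (it states that it suffices to prove $2M_{k,2m}<2S_k^{-1}<2M_{k,2n+1}$ for all $m,n\geq 1$), but this is the same argument.
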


\begin{proof}
    It is well-known that the metallic mean $S_k$ has a continued fraction 
    $S_k = k+[k,k,\ldots].$ Therefore, $S_k^{-1}= [k,k,k,\ldots].$ So it follows that 
$$
2S_k^{-1} = [k,k,k,\ldots] +[k,k,k,\ldots].
$$

Because of the definition of $G_{k,n}$, to prove the claim of this proposition, it is enough to show that 
$$
2M_{k,2m} < 2S_k^{-1} < 2M_{k,2n+1}
$$
for any $n,m\geq1$. This follows immediately from the fact that for each $m\geq1$, the number $M_{k,2m}$ is an even convergent to $S_k^{-1}$, whereas for any $n$, the number $M_{k,2n+1}$ is an odd convergent to $S_k^{-1}$.
\end{proof}
Next, we show one more result about the structure of the intervals $G_{k,n}$.
\begin{proposition}\label{prop:disjoint}
  For any fixed $k\geq3$ the intervals $G_{k,n}$ are disjoint.
\end{proposition}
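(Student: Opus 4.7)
By Proposition~\ref{prop:metall}, every odd-indexed interval $G_{k,2n-1}$ lies strictly to the left of $2S_k^{-1}$ while every even-indexed $G_{k,2m}$ lies strictly to the right, so disjointness across parities is automatic. It remains to prove pairwise disjointness within each parity class.

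The plan is to establish the single comparison
$$
(-1)^{n+1}\bigl(M_{k,n+2}+m_{k,n+2}-2M_{k,n+1}\bigr)>0 \qquad (n\ge 1).
$$
For odd $n$ this reads $M_{k,n+2}+m_{k,n+2}>2M_{k,n+1}$, placing the left endpoint of $G_{k,n+2}$ strictly to the right of the right endpoint $2M_{k,n+1}$ of $G_{k,n}$; for even $n$ the reversed inequality places $G_{k,n+2}$ strictly to the left of $G_{k,n}$. Consecutive same-parity intervals are therefore disjoint. Since along odd $n$ the sequence $\{2M_{k,n+1}\}$ consists of even convergents of $S_k^{-1}$ and is strictly increasing toward $2S_k^{-1}$, while along even $n$ it consists of odd convergents strictly decreasing toward $2S_k^{-1}$, a short transitivity argument upgrades this to pairwise disjointness of all odd-indexed intervals and of all even-indexed intervals.

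To prove the boxed comparison I would use the standard convergent identities for $S_k^{-1}=[k,k,k,\ldots]$. Since numerators and denominators of these convergents obey the same recurrence, one has $M_{k,j}=d_{j-1}/d_{j}$ (with $d_0=1$), and the determinant identity $d_{n+1}^{2}-d_{n}d_{n+2}=(-1)^{n+1}$ gives
$$
M_{k,n+2}-M_{k,n+1}=\frac{(-1)^{n+1}}{d_{n+2}\,d_{n+1}}.
$$
Viewing $m_{k,n+2}$ as the last convergent of the $(n{+}3)$-term continued fraction $[k,\ldots,k,1]$ yields the mediant formula $m_{k,n+2}=(d_{n+1}+d_n)/(d_{n+2}+d_{n+1})$, and the same determinant then gives
$$
m_{k,n+2}-M_{k,n+1}=\frac{(-1)^{n+1}}{(d_{n+2}+d_{n+1})\,d_{n+1}}.
$$
Summing these two identities expresses $M_{k,n+2}+m_{k,n+2}-2M_{k,n+1}$ as $(-1)^{n+1}$ times a strictly positive sum of reciprocals, which is exactly the required comparison. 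The only substantive point is the mediant identity for $m_{k,n+2}$; once that is recorded, everything reduces to sign-tracking on the convergent determinant, and there is no serious obstacle.
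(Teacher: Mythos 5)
Your proof is correct and follows essentially the same route as the paper: Proposition~\ref{prop:metall} handles the cross-parity separation, and within each parity class you establish a strict chain of endpoints, the key inequality $(-1)^{n+1}\bigl(M_{k,n+2}+m_{k,n+2}-2M_{k,n+1}\bigr)>0$ being exactly the paper's inequality \eqref{ineq:ordering} (second part), just packaged in a parity-uniform sign convention and derived via the convergent determinant identity rather than the Farey-difference manipulation the paper uses. The only small difference is that you lean on the paper's earlier remark that each $G_{k,n}$ is non-degenerate (which is where $k\ge3$ enters for the within-parity ordering), whereas the paper re-derives that non-degeneracy inline as the first half of its chain inequality.
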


\begin{proof}
Fix $k\geq3$. From Proposition \ref{prop:metall} we know that 
$$
G_k^{odd} \bigcap G_k^{even} = \emptyset.
$$

Thus, it remains to show that intervals in the collection $\{ G_{k,2n-1}\}_{n=1}^\infty$ are disjoint and intervals in the collection $\{ G_{k,2m}\}_{m=1}^\infty$ are disjoint, too. Let us only prove the claim for $\{ G_{k,2n-1}\}_{n=1}^\infty$ as the other one will follow similar lines with minor changes.

In fact, we will show a stronger statement: any sequence of elements $\{x_{2n-1}\}_{n\geq1}$, such that $x_{2i-1}\in G_{k,2i-1}$ for each $i$, is strictly increasing. Note that for the other part, the proof of which we skip, the correct statement would be that any sequence of elements $\{x_{2n}\}_{n\geq1}$, such that $x_{2i}\in G_{k,2i}$ for each $i$, is strictly decreasing.

It is enough to show that 
\begin{equation}\label{ineq:ordering}
M_{k,2n-1} +m_{k,2n-1}< 2M_{k,2n}< M_{k,2n+1}+m_{k,2n+1}.
\end{equation}

The first inequality is equivalent to 
$$
\frac{1}{d_{2n}d_{2n-1}}=M_{k,2n-1} -M_{k,2n} < M_{k,2n} -m_{k,2n-1}  = \frac{k-1}{d_{2n}(d_{2n-1}+d_{2n-2})},
$$
which in turn is equivalent to
$$
(d_{2n-1}+d_{2n-2})< (k-1)d_{2n-1},
$$
which is true because $k\geq3$ and $d_{2n-2}<d_{2n-1}$.

The second inequality from \eqref{ineq:ordering} is equivalent to
$$
-\frac{1}{d_{2n}(d_{2n+1}+d_{2n})} =M_{k,2n} -m_{k,2n+1} < M_{k,2n+1} -M_{k,2n} = \frac{1}{d_{2n}d_{2n+1}},
$$
which is trivially true, as the left-hand side is negative, while the right-hand side is positive.
\end{proof}








The structure of the gaps is visualized in Figure \ref{figure1}. In green color, there are intervals/points for which we know that they belong to $S(k)+S(k)$. In red, there are intervals (gaps), that do not lie in $S(k)+S(k)$. In black, there are intervals for which we do not know whether they lie in $S(k)+S(k)$ or not. The proportions are chosen for better visibility of the structure of the gaps and they do not accurately represent the lengths of the intervals. For instance, the green interval $(0,\frac{1}{k-1})$ in reality occupies more than half of the maximal interval.

\begin{figure}\label{figure1}
\centering
\begin{tikzpicture}[scale=20]  

\def\MkOne{1/3}       
\def\MkTwo{3/10}      
\def\MkThree{10/33}   
\def\mkOne{1/4}       
\def\mkTwo{3/13}      
\def\mkThree{10/43}   

\def\GkoneA{\MkOne + \mkOne}  
\def\GkoneB{\MkTwo+\MkTwo}           

\def\GktwoA{ \MkThree+\MkThree}      
\def\GktwoB{\MkTwo + \mkTwo} 

\def\GkthreeA{\MkThree + \mkThree}  
\def\GkthreeB{2 * \MkThree}         

\def\GkfourA{2 * \MkThree}         
\def\GkfourB{\MkThree + \mkThree}  

\draw[line width=0.3mm] (0, 0) -- (2/3, 0);  

\draw[line width=0.4mm, red] (0.25, 0) -- (0.37, 0) node[midway, above] {$G_{k,1}$};
\draw[line width=0.4mm, red] (0.545, 0) -- (0.61, 0) node[midway, above] {$G_{k,2}$};

\draw[line width=0.4mm, red] (0.40, 0) -- (0.44, 0) node[midway, above] {$G_{k,3}$};

\draw[line width=0.4mm, red] (0.505, 0) -- (0.525, 0) node[midway, above] {$G_{k,4}$};

\draw[line width=0.4mm, red] (0.45, 0) -- (0.46, 0) 
node[midway, above] {$G_{k,5}$};

\draw[line width=0.4mm, red] (0.465, 0) -- (0.47, 0);

\draw[line width=0.4mm, red] (0.491, 0) -- (0.498, 0) ;

\draw[line width=0.25mm, green] (0, 0) -- (0.2, 0) ;



\filldraw [green,fill=green] (1/5, 0) circle (0.03pt);
\filldraw [black,fill=green]  (2 / 3, 0) circle (0.05pt);
\filldraw [green,fill=green]  (0, 0) circle (0.03pt);
\filldraw [black,fill=green]  (2*0.30277-1/8, 0) circle (0.1pt);

\filldraw [black,fill=green]  (0.25, 0) circle (0.07pt);
\filldraw [black,fill=green]  (0.37, 0) circle (0.07pt);
\filldraw [black,fill=green]  (0.545, 0) circle (0.07pt);
\filldraw [black,fill=green]  (0.61, 0) circle (0.05pt);
\filldraw [black,fill=green]  (0.4, 0) circle (0.05pt);
\filldraw [black,fill=green]  (0.44, 0) circle (0.05pt);
\filldraw [black,fill=green]  (0.505, 0) circle (0.04pt);
\filldraw [black,fill=green]  (0.525, 0) circle (0.04pt);
\filldraw [black,fill=green]  (0.45, 0) circle (0.04pt);
\filldraw [black,fill=green]  (0.46, 0) circle (0.04pt);
\filldraw [black,fill=green]  (0.465, 0) circle (0.03pt);
\filldraw [black,fill=green]  (0.47, 0) circle (0.03pt);
\filldraw [black,fill=green]  (0.491, 0) circle (0.03pt);
\filldraw [black,fill=green]  (0.498, 0) circle (0.03pt);

\node[below] at (0, 0) {0};
\node[below] at (2/3, 0) {$\frac{2}{k}$};
\node[below] at (1/5, 0) {$\frac{1}{k-1}$};
\node[below] at (2*0.30277-1/8, 0) {$2S_k^{-1}$};

\end{tikzpicture}
\caption{Known structure of $S(k)+S(k)$.}
\end{figure}
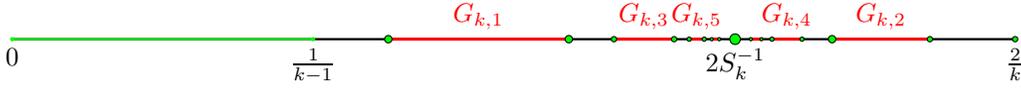

Now let us prove Theorem \ref{prop:gaps}.

\begin{proof}[Proof of Theorem \ref{prop:gaps}]
Fix some $k\geq3$ and $n\geq1$.  Consider a case of $n$ being an odd number. We show that any $x\in G_{k,n}$ is not expressible as a sum of two elements of $b,c\in S(k)$.

Assume that the statement of the theorem is not true, that is some $x\in G_{k,n}$ is expressible. We distinguish several cases of properties of $c$ and $b$, and we will come to a contradiction in all of them. 

First, let us assume that continued fraction expansions of both $c$ and $b$ have at least $n$ partial quotients. Let us show that in this case in the expression $x=c+b$, the first $n$ partial quotients of both $b$ and $c$ should be equal to $k$. Once again, assume the opposite, that there is at least one partial quotient with a value at least $k+1$. Note that we don't need to consider a case of partial quotients less than $k$ as we want to decompose $x$ into a sum of two elements of $S(k)$.

By our assumption, there exist $1\leq j \leq n$,  such that $c_i=b_i=k$ for $i=1,\ldots,j-1$, but $c_j\geq k+1$ or $b_j\geq k+1$. Because of this, $x$ is expressible as
\begin{equation}\label{eq:whatever}
x = [\underbrace{k,\ldots,k}_{j-1 \text{ times }},k+C_1,\ldots] + [\underbrace{k,\ldots,k}_{j-1 \text{ times }},k+C_2,\ldots]
\end{equation}
for some integer $C_1\geq1,C_2\geq0$.

We have two options:
\begin{itemize}
    \item If $j$ is odd, then we can show that $x\leq [\underbrace{k,\ldots,k}_{n \text{ times }}] + [\underbrace{k,\ldots,k}_{n \text{ times }},1]=M_{k,n}+m_{k,n}$.

Indeed, from \eqref{eq:whatever} by the rule of comparison of continued fractions, we can bound 
$$
x =[\underbrace{k,\ldots,k}_{j-1 \text{ times }},k+C_1,\ldots] + [\underbrace{k,\ldots,k}_{j-1 \text{ times }},k+C_2,\ldots] \leq [\underbrace{k,\ldots,k}_{j-1 \text{ times }},k+1] + [\underbrace{k,\ldots,k}_{j \text{ times }}]= m_{k,j}+M_{k,j}.
$$
By the proof of Proposition \ref{prop:disjoint}, we have $M_{k,j}+m_{k,j} \leq M_{k,n}+m_{k,n}$ when $j\leq n$ and both $j,n$ are odd.


    \item If $j$ is even, in particular it means $j\leq n-1$, as $n$ is odd, then we can show that 
    $$
    x>[\underbrace{k,\ldots,k}_{n+1 \text{ times }}] + [\underbrace{k,\ldots,k}_{n+1 \text{ times }}]=2M_{k,n+1}.
    $$

Indeed, from \eqref{eq:whatever} by the rule of comparison of continued fractions, we can bound 
$$
x =[\underbrace{k,\ldots,k}_{j-1 \text{ times }},k+C_1,\ldots] + [\underbrace{k,\ldots,k}_{j-1 \text{ times }},k+C_2,\ldots] > [\underbrace{k,\ldots,k}_{j \text{ times }}] + [\underbrace{k,\ldots,k}_{j \text{ times }},1]=M_{k,j}+m_{k,j}.
$$

By the definition of $G_{k,n}$, we have $M_{k,j}+m_{k,j} \geq 2M_{k,n+1}$ when $j,n+1$ are even.

\end{itemize}

This means that we can assume that the expression starts as
$$
x = [\underbrace{k,\ldots,k}_{n \text{ times }},\ldots] + [\underbrace{k,\ldots,k}_{n \text{ times }},\ldots].
$$
As $n+1$ is even, we get
$$
x > [\underbrace{k,\ldots,k}_{n+1 \text{ times }}] + [\underbrace{k,\ldots,k}_{n+1 \text{ times }}] = 2M_{k,n+1}.
$$
Thus, we are outside $G_{k,n}$ once again, which contradicts $x\in G_{k,n}$.

Second, we consider the case where at least one number from $c,b$ has at most $n-1$ partial quotients. Without loss of generality, assume that the number $c=[c_1,\ldots,c_m]$ has  $m\leq n-1$ partial quotients, and the number of partial quotients of $b$ is not less than the number of partial quotients of $c$. We can in fact assume that the number of partial quotients of $b$ is strictly greater than $m$, because if it is also equal to $m$, then, depending on the parity of $m$, both $b$ and $c$ would simultaneously be two even or two odd convergents to both endpoints of $G_{k,n}$, so $b+c$ would always be outside of $G_{k,n}$, leading to a contradiction.

Similarly to the previous case, we can easily check that the first $m$ partial quotients of both numbers are equal to $k$, so we have

\begin{equation}\label{eq:parity:m}
x = [\underbrace{k,\ldots,k}_{m \text{ times }}] + [\underbrace{k,\ldots,k}_{m \text{ times }},\ldots].
\end{equation}
At this point we need to analyze the parity of $m$. The general idea is as follows: if $m$ is even, then the first term in \eqref{eq:parity:m} is an even convergent, so the sum would be "small" no matter what happens with the second term after $m$ partial quotients; if the $m$ is odd, then the first term is \eqref{eq:parity:m} is an odd convergent, so the sum would be "large" no matter what happens with the second term after $m$ partial quotients. Now, let us show it rigorously.

If $m=2l$ is even, then
$$
x = [\underbrace{k,\ldots,k}_{2l \text{ times }}] + [\underbrace{k,\ldots,k}_{2l \text{ times }},\ldots] \leq [\underbrace{k,\ldots,k}_{n-1 \text{ times }}] + [\underbrace{k,\ldots,k}_{n \text{ times }}] \leq [\underbrace{k,\ldots,k}_{n \text{ times }}] + [\underbrace{k,\ldots,k}_{n \text{ times }},1] = M_{k,n}+m_{k,n}.
$$

If $m=2l+1$ is odd, then 
$$
x = [\underbrace{k,\ldots,k}_{2l+1 \text{ times }}] + [\underbrace{k,\ldots,k}_{2l+1 \text{ times }},\ldots] \geq [\underbrace{k,\ldots,k}_{n-2 \text{ times }}] + [\underbrace{k,\ldots,k}_{n-1 \text{ times }}]
$$
We want to prove that right-hand side of the previous inequality is greater or equal to  
$$ [\underbrace{k,\ldots,k}_{n+1 \text{ times }}] + [\underbrace{k,\ldots,k}_{n+1 \text{ times }}] = 2M_{k,n+1}.
$$
So we want to check that
$$
[\underbrace{k,\ldots,k}_{n-2 \text{ times }}] + [\underbrace{k,\ldots,k}_{n-1 \text{ times }}] \geq [\underbrace{k,\ldots,k}_{n+1 \text{ times }}] + [\underbrace{k,\ldots,k}_{n+1 \text{ times }}].
$$
The latter inequality is equivalent to
$$
M_{k,n-2}-M_{k,n+1}=[\underbrace{k,\ldots,k}_{n-2 \text{ times }}] -[\underbrace{k,\ldots,k}_{n+1 \text{ times }}]  \geq [\underbrace{k,\ldots,k}_{n+1 \text{ times }}] - [\underbrace{k,\ldots,k}_{n-1 \text{ times }}] =M_{k,n+1}-M_{k,n-1}.
$$
One can find those differences explicitly to check that the last inequality is true, however we will provide a simpler geometrical argument. All numbers in the above inequality are convergents to $S_k^{-1}$, so we have
$$
M_{k,n-2}-M_{k,n+1} > M_{k,n-2}- S_k^{-1} >S_k^{-1} - M_{k,n-1} >  M_{k,n+1}-M_{k,n-1},
$$
where we used standard properties of convergents, namely that each next one is closer to the number than the previous one and that odd convergents are always greater than the irrational number to which they are convergents, while even one are always smaller. So we came to a contradiction once agian.

Therefore, any number from $G_{k,n}$ cannot be expressed.

The case of even $n$ can be done similarly with minor changes.

\end{proof}


\section{Numerical data, final remarks and open problems}\label{sec:remarks}

First, we provide numerical experiments for some particular values of $x$. As it will be seen, the algorithm (which practically just uses the Gauss map on some shifted versions of the original number) seems to provide approximations of reasonable quality, which potentially may have some applications in the theory of regular diophantine approximations. 

\begin{itemize}
    \item For $x=\pi-3=[7, 15, 1, 292, 1, 1,\ldots]$, the algorithm gives decomposition
    $$
\pi-3 =   [8, 211, 73445474, 4286135421,\ldots]+[60, 58016, 1553951245, 204528884225,\ldots]
$$
with precision 
$$
\left|\pi - 3 - [8, 211, 73445474, 4286135421]-[60, 58016, 1553951245, 204528884225]\right|<10^{-42}.
$$

\item For $x=e-2=[1, 2, 1, 1, 4, 1, 1, 6, 1,\ldots]$,  the algorithm gives decomposition
    $$
e-2 = [2, 8, 47, 138, 790, 3088,\ldots] + [4, 26, 81, 349, 940, 41582,\ldots]
    $$
with precision 
$$
|e-2 - [2, 8, 47, 138, 790, 3088] - [4, 26, 81, 349, 940, 41582] | < 10^{-28}.
$$

\item For $x=2\cdot(\sqrt2-1)=2\cdot[2,2,2,\ldots]$, the algorithm gives decomposition
$$
2\cdot(\sqrt2-1) = [ 2, 51, 139299, 23380586,\ldots] + [ 3, 2143, 8527219, 38512412,\ldots]
$$
with precision
$$
| 2\cdot(\sqrt2-1) - [ 2, 51, 139299, 23380586] + [ 3, 2143, 8527219, 38512412] | < 10^{-36}.
$$

\item For $x= 34/55=[1, 1, 1, 1, 1, 1, 1, 1, 1]$, the algorithm gives decomposition

$$
\frac{34}{55} = [2,37]+[8,103].
$$
\end{itemize}

One can see that in these examples not only is each sequence of partial quotients $\{c_n\}_{n\geq1}$ and $\{b_n\}_{n\geq1}$ non-decreasing, but a 'merged' sequence $c_1,b_1,c_2,b_2,\ldots$ is non-decreasing, too. Unfortunately, in general, the last observation does not hold for all numbers. One can easy construct numbers $x$ for which $c_2(x) < b_1(x)$. For example, one can check that for all numbers $x\in(10/21,23/48)$ we have $c_1(x)=3,b_1(x)=6,c_2(x)=5$. Nevertheless, this suggests the following open problem.

\begin{problem}\label{problem1}
Are the sequences of partial quotients $\{c_n\}_{n\geq1}$ and $\{b_n\}_{n\geq1}$ defined by algorithm \eqref{algorithm:bn},\eqref{algorithm:cn} respectively non-decreasing? 
\end{problem}

Let $G$ be a set of numbers with partial quotients tending to infinity first defined by Good in \cite{MR0004878}, so that
$$
G = \{ x\in(0,1) : a_n \to\infty \text{ as } n\to\infty \}.
$$
This set has many applications for obtaining bounds of Hausdorff dimension in metrical continued fractions theory. The heuristics above suggest the following question.
\begin{problem}\label{problem2}
    Can every real number $x\in(0,1]$ be expressed as a sum of two elements $x=g_1+g_2$, where $g_1,g_2\in G$?
\end{problem}
In a stronger version of Problem \ref{problem2} one can ask for partial quotients of $g_1,g_2$ to form a strictly (eventually) increasing sequence. One can note that a positive answer to Problem \ref{problem1} would imply a positive answer to a stronger version of Problem \ref{problem2}.

A 'half' of Problem \ref{problem1} is already known by Lemma \ref{lemma:growth:bn} above, where we showed that $b_n\geq c_n$ for all $n$. Using \eqref{ineq:ck:bound} and \eqref{ineq:bk:bound} one can easily get an inequality of the form $c_{n+1}\geq \theta b_n$ for all $n\in\N$ and for some $\theta<1$ (for instance, one can show this with $\theta=1/2$).









\bibliographystyle{abbrv}
\bibliography{bibliog}

\end{document}